\setlist{topsep=3pt,partopsep=0pt,itemsep=1pt,parsep=0pt}
\newtheorem{theorem}{Theorem}
\newtheorem{corollary}[theorem]{Corollary}
\newtheorem{lemma}[theorem]{Lemma}
\theoremstyle{definition}
\def \deg {{\rm deg}}
\def \leq {\leqslant}
\def \geq {\geqslant}
\def \P {\mathcal{P}}
\def \E {\mathcal{E}}
\def \A {\mathcal{A}}
\def \B {\mathcal{B}}
\def \S {\mathcal{S}}
\def \SP {{\rm SP}}
\def \MMS {{\rm MMS}}
\def \NLB {{\rm NLB}}
\def \L {{\rm LL}}
\def \mod#1{{\:({\rm mod}\ #1)}}
\renewcommand{\geq}{\geqslant}
\renewcommand{\leq}{\leqslant}
\let\oldproofname=\proofname
\renewcommand{\proofname}{\rm\bf{\oldproofname}}
\author{Yanxun Chang\thanks{Department of Mathematics, Beijing Jiaotong University, Beijing, China (yxchang@bjtu.edu.cn, jlzhou@bjtu.edu.cn)}, Charles J. Colbourn\thanks{School of Computing , Informatics, and Decision Systems Engineering, Arizona State University, Tempe AZ, USA (charles.colbourn@asu.edu)}, Adam Gowty\thanks{School of Mathematical Sciences, Monash University, Vic, Australia (adam.gowty@monash.edu, danhorsley@gmail.com)},\\ Daniel Horsley\footnotemark[3], Junling Zhou\footnotemark[1]}
\title{\bf New bounds on the maximum size of Sperner partition systems}
\date{}
\begin{document}
\setstretch{1.1}
\maketitle

\begin{abstract}
An \emph{$(n,k)$-Sperner partition system} is a collection of partitions of some $n$-set, each into $k$ nonempty classes, such that no class of any partition is a subset of a class of any other. The maximum number of partitions in an $(n,k)$-Sperner partition system is denoted $\SP(n,k)$. In this paper we introduce a new construction for Sperner partition systems and use it to asymptotically determine $\SP(n,k)$ in many cases as $\frac{n}{k}$ becomes large. We also give a slightly improved upper bound for $\SP(n,k)$ and exhibit an infinite family of parameter sets $(n,k)$ for which this bound is tight.
\end{abstract}

\section{Introduction}

A \emph{Sperner family} is a family of subsets of some ground set such that no set in the family is a subset of any other. Sperner families have been extensively studied (see \cite{And}, for example). Meagher, Moura and Stevens introduced Sperner partition systems in \cite{MeaMouSte} as a natural variant of Sperner families. An \emph{$(n,k)$-Sperner partition system} is a collection of partitions of some $n$-set, each into $k$ nonempty classes, such that no class of any partition is a subset of a class of any other. Most of the research on Sperner partition systems has focussed on investigating, for a given $n$ and $k$, the maximum number of partitions in an $(n,k)$-Sperner partition system. This quantity is denoted $\SP(n,k)$. The exact value of $\SP(n,k)$ is known in the following situations.
\begin{itemize}[itemsep=1.5mm,topsep=1.5mm]
    \item
$\SP(n,k)=1$ when $k=1$ or $k \leq n < 2k$ (for then any partition has a class of size $n$ or $1$).
    \item
$\SP(n,k)=\binom{n-1}{n/k-1}$ when $k$ divides $n$ (see \cite{MeaMouSte}).
    \item
$\SP(n,k)=\binom{n-1}{\lfloor n/2 \rfloor -1}$ when $k=2$ (using the Erd\H{o}s-Ko-Rado theorem, see \cite{LiMea}).
    \item
$\SP(n,k)=2k$ when $n=2k+1$ and $k$ is even (see \cite{LiMea}).
\end{itemize}
Note that for all unsolved cases we have $k \geq 3$, $n > 2k$ and $k$ does not divide $n$.

In the unsolved cases, bounds are known on $\SP(n,k)$. Let $n$ and $k$ be positive integers such that $n \geq k$, and let $c$ and $r$ be the unique integers such that $n=ck+r$ and $r \in \{0,\ldots,k-1\}$. In \cite{MeaMouSte}, the authors show that $\SP(n,k) \leq \MMS(n,k)$ where
\[\MMS(n,k) = \frac{\binom{n}{c}}{k-r+\frac{r(c+1)}{n-c}}.\]
Note that $0 \leq \frac{r(c+1)}{n-c} \leq 1$ because $0 \leq r \leq k-1$. Using this upper bound together with Baranyai's theorem \cite{Bar}, the authors of \cite{MeaMouSte} establish that $\SP(n,k)=\MMS(n,k)=\binom{n-1}{c-1}$ when $k$ divides $n$, as stated above. Finally, they note that $\SP(n+1,k) \geq \SP(n,k)$ because it is easy to augment an $(n,k)$-Sperner partition system to obtain an $(n+1,k)$-Sperner partition system with the same number of partitions. Thus they establish a naive lower bound $\SP(n,k) \geq \NLB(n,k)$ where
\[\NLB(n,k) = \mfrac{1}{k}\mbinom{n-r}{c}.\]
Despite its naivety, $\NLB(n,k)$ has hitherto been the best lower bound known on $\SP(n,k)$ for general $n$ and $k$. In \cite{LiMea}, Li and Meagher show that $\SP(2k+1,k) \in \{2k-1,2k\}$, $\SP(2k+2,k) \in \{2k+1,2k+2,2k+3\}$ and $\SP(3k-1,k) \geq 3k-1$. They also establish an inductive lower bound by showing that $\SP(n+k,k) \geq k\cdot\SP(n,k)$ for $n \geq k \geq 2$.

In this paper we introduce a new construction for Sperner partition systems using a result of Bryant \cite{Bry}. With this we are able to establish that the upper bound $\MMS(n,k)$ is asymptotically correct in many situations where $c$ is large.

\begin{theorem}\label{t:asymptotic}
Let $n$ and $k$ be integers with $n \rightarrow \infty$, $k=o(n)$ and $k \geq 3$, and let $c$ and $r$ be the integers such that $n=ck+r$ and $r \in \{0,\ldots,k-1\}$. Then $\SP(n,k)\sim\MMS(n,k)$ if
\begin{itemize}
    \item
$n$ is even and $r \notin \{1,k-1\}$; or
    \item
$k-r \rightarrow \infty$.
\end{itemize}
\end{theorem}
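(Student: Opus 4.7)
Since $\SP(n,k) \le \MMS(n,k)$ is already established in \cite{MeaMouSte}, the theorem reduces to exhibiting an $(n,k)$-Sperner partition system of size $(1-o(1))\MMS(n,k)$. My plan is to build one using only \emph{balanced} partitions, those with exactly $r$ classes of size $c+1$ and $k-r$ classes of size $c$. For a family of balanced partitions the Sperner condition is equivalent to three requirements: each $c$-subset (respectively $(c+1)$-subset) of $[n]$ appears as a block at most once, and no $c$-block is contained in any $(c+1)$-block. Writing $x_S,y_T\in\{0,1\}$ for the indicators that $S$ and $T$ appear as blocks and summing $x_S+y_T\le 1$ over all incident pairs $S\subset T$ with $|S|=c$, $|T|=c+1$ reproduces the bound $N\le\MMS(n,k)$ for any such family of $N$ partitions, so a balanced family can in principle be asymptotically tight. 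Attaining this requires using $\sim\tfrac{(k-r)(n-c)}{(k-r)(n-c)+r(c+1)}\binom{n}{c}$ of the $c$-subsets as blocks and covering almost all of the remaining $c$-subsets as sub-$c$-subsets of the chosen $(c+1)$-blocks.

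The construction proceeds in two stages. First, I select a collection $\mathcal{T}$ of $(c+1)$-subsets of $[n]$ organised into $N$ packings, each packing being $r$ pairwise-disjoint $(c+1)$-subsets covering $r(c+1)$ points. Second, for each packing I complete it to a partition of $[n]$ by adjoining $k-r$ pairwise-disjoint $c$-subsets of the $(k-r)c$ residual points, subject to (a) the $c$-blocks being pairwise distinct across the $N$ partitions, and (b) no $c$-block lying in the shadow $\partial\mathcal{T}$. The enabling tool in the second stage is Bryant's theorem \cite{Bry}, a near-Baranyai decomposition result which resolves (essentially) $\binom{[n]}{c}\setminus\partial\mathcal{T}$ into $N$ partial parallel classes of size $k-r$, one to be attached to each $(c+1)$-packing to form a balanced partition of $[n]$. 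A direct count of blocks then yields $N=(1-o(1))\MMS(n,k)$.

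The two cases in the theorem correspond to regimes in which the hypotheses of Bryant's theorem (and of the first-stage packing design) are met. When $k-r\to\infty$ the $(c+1)$-packings form a vanishing proportion of each partition, the shadow $\partial\mathcal{T}$ is comparatively small, and the second-stage decomposition is easy to arrange. When $k-r$ is bounded, $n$ even supplies the parity required by a Baranyai-type factorisation of the relevant $c$-subsets, while the exclusion $r\notin\{1,k-1\}$ rules out partitions with only a single block of one size, a degenerate case whose rigidity would obstruct the simultaneous completion. I expect the main technical obstacle to lie in the second stage, where Bryant's theorem must be invoked to respect both the Sperner shadow condition and Baranyai-style resolvability at the same time; the hypotheses on $(n,k,r)$ are exactly those under which this step can be carried through.
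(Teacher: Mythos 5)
Your high-level strategy (build a family of balanced partitions, $r$ blocks of size $c+1$ and $k-r$ of size $c$, meeting the three conditions you list) is the same starting point as the paper, and your recognition that Bryant's theorem is the key tool is correct. However, the proposal has a genuine gap at exactly the place the theorem's hypotheses bite, and it misreads what Bryant's theorem does.

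The missing idea is the paper's structural device: split the ground set into two halves $X_1,X_2$ of size $n/2$, define $\mathcal{A}$ to be the $c$-subsets with $\min(|E\cap X_1|,|E\cap X_2|)\ge u$ and $\mathcal{B}$ to be the $(c+1)$-subsets with $\min(|E\cap X_1|,|E\cap X_2|)\le u-1$ (or the dual thresholds in the alternative construction). Then no member of $\mathcal{A}$ can sit inside a member of $\mathcal{B}$ by the intersection counts alone, so the whole edge set $\mathcal{A}\cup\mathcal{B}$ is a clutter before you place a single block. The Sperner condition is therefore \emph{structural}, not something you enforce by ``avoiding the shadow $\partial\mathcal{T}$'' after the $(c+1)$-blocks are chosen. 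Your plan, by contrast, would need to co-design the packings $\mathcal{T}$ and the $c$-block resolution so that the shadow is dodged globally across $N$ partitions, and you give no mechanism for this; it is effectively the hard part of the problem restated.

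Second, Bryant's theorem is not a ``near-Baranyai decomposition'' that resolves $\binom{[n]}{c}\setminus\partial\mathcal{T}$ into partial parallel classes. It takes a \emph{given} edge colouring of a hypergraph $H$ and a vertex subset $Y$ for which every permutation of $Y$ is an automorphism of $H$, and it rearranges the colouring to be almost regular on $Y$ without changing colour-class sizes or the restriction of edges to $V(H)\setminus Y$. This is precisely why the two-half clutter matters: permuting within $X_1$ (or within $X_2$) is an automorphism of $\mathcal{A}\cup\mathcal{B}$ by construction, so the theorem applies with $Y=X_1$ and then $Y=X_2$ (the paper's Lemma~\ref{l:colouringToSystem}). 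To feed it a suitable initial colouring one needs, per colour class, a set of $k$ edges ($r$ from $\mathcal{B}$, $k-r$ from $\mathcal{A}$) whose total intersection with each $X_i$ is exactly $n/2$; arranging these ``compatible'' pairs and triples is where the parity of $n$, $ck$, $k-r$ and $r$, and the exclusions $r\notin\{1,k-1\}$, genuinely enter (Lemmas~\ref{l:compTriples}, \ref{l:binomProduct}, \ref{l:mainConstruction}, \ref{l:altConstruction}). Your explanation that $n$ even ``supplies the parity required by a Baranyai-type factorisation'' and that $r\notin\{1,k-1\}$ ``rules out partitions with a single block of one size'' does not correspond to the actual obstruction, and the $k-r\to\infty$ case in the paper is handled by monotonicity in $n$ together with the even cases, not by a separate shadow-size estimate.

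In short, without the $X_1,X_2$ split to make incomparability automatic and to create the symmetry Bryant's theorem requires, and without the compatible-pair/triple machinery to seed the edge colouring, the construction does not go through.
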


Note that the lower bound $\NLB(n,k)$ only implies the result of Theorem~\ref{t:asymptotic} when $r$ is very small compared to $k$, and the result of \cite{LiMea} that $\SP(n+k,k) \geq k \cdot \SP(n,k)$ never implies Theorem~\ref{t:asymptotic} (see Lemmas~\ref{l:NLBNotTight} and \ref{l:LiMeaNotTight}). It is also worth noting that the Sperner partition systems we construct to prove Theorem~\ref{t:asymptotic} are almost uniform (see Lemmas~\ref{l:mainConstruction} and \ref{l:altConstruction}, and note that it is easy to augment an almost uniform $(n,k)$-Sperner partition system to obtain an almost uniform $(n+1,k)$-Sperner partition system with the same number of partitions).

We also prove a result which provides an implicit upper bound on $\SP(n,k)$ for $k \geq 4$. In order to state it we require some definitions. For any nonnegative integer $i$ and real number $y \geq i$, let $\binom{y}{i}$ represent $\frac{1}{i!}y(y-1)\cdots(y-i+1)$. Define, for each integer $c \geq 2$, a function $\L_c:\{0\} \cup \mathbb{R}^{\geq 1} \rightarrow \mathbb{R}^{\geq 0}$ by $\L_c(0)=0$ and, for $x \geq 1$, $\L_c(x)=\binom{q}{c-1}$ where $q$ is the unique real number such that $q \geq c$ and $\binom{q}{c}=x$. An equivalent definition for $x \geq 1$ is $\L_c(x)=\frac{c}{q-c+1}x$ where $q$ is as before.

\begin{theorem}\label{t:upperBound}
If $n$ and $k$ are integers such that $n \geq 2k+2$ and $k \geq 4$, then
\[\big\lceil (1-\tfrac{r(c+1)}{n}) \cdot \SP(n,k) \big\rceil+\L_c\left(\big\lfloor \tfrac{r(c+1)}{n}\cdot\SP(n,k) \big\rfloor\right) \leq \mbinom{n-1}{c-1},\]
where $c$ and $r$ are the integers such that $n=ck+r$ and $r \in \{0,\ldots,k-1\}$.
\end{theorem}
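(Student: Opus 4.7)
The plan is to derive a per-vertex Kruskal--Katona inequality, reduce to the balanced case, and then combine monotonicity with averaging to extract the claimed bound.

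Fix a maximum $(n,k)$-Sperner partition system with $N=\SP(n,k)$ partitions on ground set $[n]$. For each $v\in[n]$, let $\F_v$ be the family of the $N$ classes containing $v$ (one per partition), with $v$ itself removed; because classes from different partitions are pairwise incomparable and all contain $v$, $\F_v$ is a Sperner antichain of subsets of $[n]\setminus\{v\}$. Let $\alpha_v$ and $\beta_v$ count the partitions in which $v$'s class has size $c$ and $c+1$ respectively, so that $\F_v$ has $\alpha_v$ sets of size $c-1$ and $\beta_v$ sets of size $c$. No $(c-1)$-set in $\F_v$ may lie in the lower shadow of any $c$-set of $\F_v$, so the real-valued Kruskal--Katona theorem (the very statement used to define $\L_c$) yields the per-vertex inequality
\[ \alpha_v + \L_c(\beta_v) \le \binom{n-1}{c-1}. \]

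Next I would reduce to the case in which every partition consists of exactly $r$ classes of size $c+1$ and $k-r$ classes of size $c$. This reduction should be possible because the per-partition LYM inequality underlying $\MMS(n,k)$ is strictly slack whenever any class has size outside $\{c,c+1\}$, so any potential counterexample to the theorem can be taken to use only balanced partitions. In this balanced setting, $\alpha_v+\beta_v=N$ for every $v$, and $\sum_v\beta_v=r(c+1)N$.

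Writing $g(x):=\L_c(x)-x$, the per-vertex inequality becomes $N + g(\beta_v)\le\binom{n-1}{c-1}$ for every $v$. The function $g$ is concave on $[0,\infty)$, vanishes at $x=0$ and $x=\binom{2c-1}{c}$, is positive between these zeroes, and is strictly decreasing on $[\binom{2c-1}{c},\infty)$. In the regime where the theorem provides a non-trivial improvement over $\MMS(n,k)$ one has $\binom{n-1}{c-1}<N$, so $g(\beta_v)<0$ for every $v$, which forces each $\beta_v$ onto the strictly decreasing branch of $g$. Let $\beta^+$ denote the unique real number with $\beta^+\ge\binom{2c-1}{c}$ and $g(\beta^+)=\binom{n-1}{c-1}-N$; then the per-vertex inequality is equivalent to $\beta_v\ge\beta^+$, and integrality improves this to $\beta_v\ge\lceil\beta^+\rceil$. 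With $b:=r(c+1)N/n$, so that $\sum_v\beta_v=nb$, averaging forces $\lceil\beta^+\rceil\le b$, hence $\lceil\beta^+\rceil\le\lfloor b\rfloor$ (since $\lceil\beta^+\rceil$ is an integer), and in particular $\beta^+\le\lfloor b\rfloor$. Because $g$ is decreasing on $[\beta^+,\infty)$,
\[ g(\lfloor b\rfloor) \le g(\beta^+) = \binom{n-1}{c-1} - N, \]
which rearranges, using $\lceil(1-r(c+1)/n)N\rceil=N-\lfloor b\rfloor$, to the desired inequality.

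The main obstacles are twofold. First, one must rigorously justify the reduction to balanced partitions; this should follow from a convexity argument built on per-partition LYM, but some care is needed to show that unbalanced partitions really can be excluded without loss. Second, one must handle the degenerate regime $N\le\binom{n-1}{c-1}$, in which the monotonicity argument above breaks down because $g(\beta_v)\ge 0$ is permitted; fortunately in this regime the target inequality should follow trivially from $N\le\binom{n-1}{c-1}$ combined with simple estimates of the $\L_c$ term. The hypotheses $n\ge2k+2$ and $k\ge4$ enter precisely to rule out the small-parameter pathologies that arise in both of these steps.
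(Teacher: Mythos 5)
The core of your argument --- fix a vertex $v$, observe that the classes containing $v$ with $v$ deleted form an antichain in $\binom{[n]\setminus\{v\}}{\cdot}$, and apply the Lov\'asz form of Kruskal--Katona to get a per-vertex inequality $\alpha_v+\L_c(\beta_v)\le\binom{n-1}{c-1}$ --- is exactly the engine of the paper's proof. In the almost-uniform case your ``every vertex satisfies $\beta_v\ge\beta^+$, so the average $b$ does too'' derivation and the paper's ``pick a vertex $z$ whose weighted degree is at least the average, then exploit concavity of $a\mapsto a+\L_c(p-a)$'' derivation are dual formulations of the same optimization, and your version is a perfectly clean way to get the bound there.

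The genuine gap is the reduction to almost-uniform (balanced) partitions, which you flag as an ``obstacle'' but then wave away. You assert that since per-partition LYM is slack on unbalanced partitions, any counterexample may be assumed balanced; but slackness in an averaged inequality does not let you discard or rebalance individual partitions. Modifying an unbalanced partition into a balanced one can easily destroy the Sperner property against the other partitions, and the claim that $\SP(n,k)$ is always attained by an almost-uniform system is, as far as the paper (and \cite{MeaMouSte}) is concerned, unproven. Crucially, the paper's Lemma~\ref{l:localisedClutter} is designed precisely to sidestep this: it never assumes balance. Instead, Case 1b absorbs classes larger than $c+1$ by replacing them with their $c$-shadows (costing at least $\binom{2c+1}{c}$ by Lemma~\ref{l:shadowSize}), and Case 2 inductively pushes classes smaller than $c$ upward via $\partial^{j+1}(\E_j)$, checking that the weighted quantity $\sum_E\frac{c-|E|}{|E|+1}$ and the edge count do not decrease. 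Your argument has no analogue of either step, and without one the per-vertex inequality involves sets of many sizes and no longer collapses to a single-variable statement in $\beta_v$.

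A secondary but real issue is the ``degenerate regime.'' Your claim that $N\le\binom{n-1}{c-1}$ makes the target inequality ``trivial'' is not right: when $\lfloor r(c+1)N/n\rfloor<\binom{2c-1}{c}$ one has $\L_c(\lfloor b\rfloor)>\lfloor b\rfloor$, so $a_0+\L_c(\lfloor b\rfloor)$ can exceed $N$ and the naive bound does not close. Indeed the paper's Case 2 covers the larger range $p_0<p_1$ with $p_1\ge\binom{n-1}{c-1}+1$, splits into $c=2$ and $c\ge3$, and for $c\ge3$ needs explicit estimates such as $z\ge\binom{3c+1}{c}$ plus a finite computer check for residual parameter pairs. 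None of this is trivial.

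In short: your balanced-case calculation is correct and matches the paper's, but the step from arbitrary Sperner partition systems to almost-uniform ones is a missing lemma that is likely false in general and that the paper's shadow-replacement arguments (Lemma~\ref{l:localisedClutter}, Cases 1b and 2) exist to avoid, and the small-$N$ regime also requires genuine work.
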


For fixed $n$ and $k$, the left hand side of the inequality
\begin{equation}\label{e:upperBound}
\big\lceil (1-\tfrac{r(c+1)}{n})p \big\rceil+\L_c\left(\big\lfloor \tfrac{r(c+1)}{n}p \big\rfloor\right) \leq \mbinom{n-1}{c-1}
\end{equation}
is nondecreasing in $p$ and hence there is a unique nonnegative integer $p'$ such that \eqref{e:upperBound} holds for each $p \in \{0,\ldots,p'\}$ and fails for each integer $p > p'$. This $p'$ is an upper bound for $\SP(n,k)$. We will see in Corollary~\ref{c:MMSFormUpperBound} that $p'$ is always at most $\MMS(n,k)$. In practice $p'$ can be found via a binary search, beginning with $\NLB(n,k) \leq p' \leq \MMS(n,k)$.

It is worth mentioning the connection between Sperner partition systems and \emph{detecting arrays}, which are used in testing applications to allow the rapid identification and location of faults. We can represent an $(n,k)$-Sperner partition system with $p$ partitions as an $n \times p$ array in which the $(i,j)$ entry is $\ell$ if and only if the $i$th element of the ground set is in the $\ell$th class of the $j$th partition (according to arbitrary orderings). This array is then a \emph{$(1,\overline{1})$-detecting array} (see \cite{ColFanHor}) because it has the property that for any $j_1,j_2 \in \{1,\ldots,p\}$ and $\ell_1,\ell_2 \in \{1,\ldots,k\}$, the set of rows in which the symbol $\ell_1$ appears in column $j_1$ is not a subset of the set of rows in which the symbol $\ell_2$ appears in column $j_2$. (Intuitively this condition means that the ``signature'' of any one possible fault cannot be masked by the signature of any other.) So $\SP(n,k)$ can equivalently be interpreted as the maximum number of columns in a $(1,\overline{1})$-detecting array with $n$ rows and $k$ symbols.

This paper is organised as follows. In the next section we introduce some of the notation and results we require. In Section~\ref{s:mainConstruction} we detail the main construction we use to prove Theorem~\ref{t:asymptotic} and establish that it asymptotically matches the upper bound of $\MMS(n,k)$ when $c$ is large and $r \neq k-1$. The proof of Theorem~\ref{t:asymptotic} is completed in Section~\ref{s:asymptoticProof} using a variant of our main construction. We then move on to prove Theorem~\ref{t:upperBound} in Section~\ref{s:upperBoundProof} and to exhibit an infinite family of parameter sets for which the upper bound implied by Theorem~\ref{t:upperBound} is tight in Section~\ref{s:infFamily}. Finally, in Section~\ref{s:smallParams}, we conclude by examining the performance of our bounds for small parameter sets.

\section{Preliminaries}\label{s:prelims}

For integers $n$ and $k$ with $n \geq k \geq 1$ we define $c=c(n,k)$ and $r=r(n,k)$ as the unique integers such that $n=ck+r$ and $r \in \{0,\ldots,k-1\}$. We use these definitions of $c(n,k)$ and $r(n,k)$ throughout the paper and abbreviate to simply $c$ and $r$ where there is no danger of confusion. We also use $n=ck+r$ frequently and tacitly in our calculations.

An $(n,k)$-Sperner partition system is said to be \emph{almost uniform} if each class of each of its partitions has cardinality in $\{\lfloor \frac{n}{k} \rfloor, \lceil \frac{n}{k} \rceil \}$ and hence each partition has $k-r$ classes of cardinality $c$ and $r$ classes of cardinality $c+1$. For nonnegative integers $x$ and $i$, we denote the $i$th falling factorial $x$ by $(x)_i$. For a set $S$ and a nonnegative integer $i$, we denote the set of all $i$-subsets of $S$ by $\binom{S}{i}$.

A \emph{hypergraph} $H$ consists of a vertex set $V(H)$ together with a set $\E(H)$ of edges, each of which is a nonempty subset of $V(H)$. We do not allow loops or multiple edges. A \emph{clutter} is a hypergraph none of whose edges is a subset of another. A clutter is exactly a Sperner family, but we use the term clutter when we wish to consider the object through a hypergraph-theoretic lens. A set of edges of a hypergraph is said to be \emph{$i$-uniform} if each edge in it has cardinality $i$, and a hypergraph is said to be $i$-uniform if its entire edge set is $i$-uniform.

In this paper, an \emph{edge colouring} of a hypergraph is simply an assignment of colours to its edges with no further conditions imposed. Let $\gamma$ be an edge colouring of a hypergraph $H$ with colour set $C$. For each $c \in C$, the set $\gamma^{-1}(c)$ of edges of $H$ assigned colour $c$ is called a \emph{colour class} of $\gamma$. For each $c \in C$ and $x \in V(H)$, we denote by $\deg^{\gamma}_c(x)$ the number of edges of $H$ that are incident with the vertex $x$ and are assigned the colour $c$ by $\gamma$. Further, for a subset $Y$ of $V(H)$, we say that $\gamma$ is \emph{almost regular on $Y$} if $|\deg^{\gamma}_c(x)-\deg^{\gamma}_c(y)| \leq 1$ for all $c \in C$ and $x,y \in Y$. We will make use of a result of Bryant from \cite{Bry}.

\begin{theorem}[\cite{Bry}]\label{t:almostReg}
Let $H$ be a hypergraph, $\gamma$ be an edge colouring of $H$ with colour set $C$, and $Y$ be a subset of $V(H)$ such that any permutation of $Y$ is an automorphism of $H$. There exists a permutation $\theta$ of $\E(H)$ such that $|\theta(E)| = |E|$ and $\theta(E) \setminus Y = E \setminus Y$ for each $E  \in  \E(H)$, and such that the edge colouring $\gamma'$ of $H$ given by $\gamma'(E) = \gamma(\theta^{-1}(E))$ for each $E \in \E(H)$ is almost regular on $Y$.
\end{theorem}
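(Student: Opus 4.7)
\emph{Plan.} The hypothesis that every permutation of $Y$ is an automorphism of $H$ forces the edge set $\E(H)$ to decompose into orbits under the induced action of $\mathrm{Sym}(Y)$, each orbit having the form $\mathcal{O}_{A,s} := \{A \cup Y' : Y' \in \binom{Y}{s}\}$ for some $A \subseteq V(H) \setminus Y$ and some $s \in \{0,\dots,|Y|\}$. Because the desired permutation $\theta$ must preserve both $|E|$ and $E \setminus Y$ for every edge $E$, it must permute each orbit to itself. The task thus reduces to choosing an orbit-preserving $\theta$ so that the resulting colouring $\gamma'$ is almost regular on $Y$.

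My approach would be an iterative swap argument driven by the potential
\[ \Phi(\gamma) = \sum_{c \in C} \sum_{y \in Y} \deg^{\gamma}_c(y)^2, \]
which is minimised precisely when $\gamma$ is almost regular on $Y$. If $\gamma$ is not almost regular then there exist $c \in C$ and $x,y \in Y$ with $\deg^{\gamma}_c(x) - \deg^{\gamma}_c(y) \geq 2$. Writing this as a sum over orbits of
\[ |\{E \in \mathcal{O} : \gamma(E)=c,\ x \in E,\ y \notin E\}| - |\{E \in \mathcal{O} : \gamma(E)=c,\ y \in E,\ x \notin E\}| \]
shows that some orbit $\mathcal{O}$ contributes positively, yielding an edge $E_1 \in \mathcal{O}$ with $\gamma(E_1)=c$, $x \in E_1$, $y \notin E_1$ whose partner $E_2 := (E_1 \setminus \{x\}) \cup \{y\}$ also lies in $\mathcal{O}$ and carries some colour $c' \neq c$. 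Taking $\theta$ to be the transposition of $E_1$ and $E_2$ changes $\Phi$ by $-2(\deg^{\gamma}_c(x)-\deg^{\gamma}_c(y)) + 2(\deg^{\gamma}_{c'}(x)-\deg^{\gamma}_{c'}(y)) + 4$.

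The main obstacle, and where the proof becomes delicate, is guaranteeing that this change is strictly negative. In the hardest case $\deg^{\gamma}_c(x)-\deg^{\gamma}_c(y)=2$ we need $\deg^{\gamma}_{c'}(y) > \deg^{\gamma}_{c'}(x)$, but the partner $E_2$ supplied by the orbit argument may carry an unfavourable colour. Because the transposition $(x,y)$ is an automorphism of $H$ we have $\sum_c(\deg^{\gamma}_c(x)-\deg^{\gamma}_c(y))=0$, so at least one colour $c''$ with $\deg^{\gamma}_{c''}(x)<\deg^{\gamma}_{c''}(y)$ must exist. I would handle the compensation by encoding the available partner swaps as arcs in an auxiliary digraph on $C$ and seeking either a direct $\Phi$-decreasing swap or a directed walk from $c$ to such a $c''$ whose composite, as a product of transpositions of edge pairs (all within their respective orbits), gives a valid $\theta$ that strictly decreases $\Phi$. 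Since $\Phi$ is a non-negative integer, iterating this procedure terminates in an almost regular colouring and produces the required $\theta$.
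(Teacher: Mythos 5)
The paper attributes this theorem to Bryant and does not reprove it, so there is no internal proof to compare against; I'll assess your sketch on its own merits. Your orbit decomposition $\E(H)=\bigcup\mathcal{O}_{A,s}$, the potential $\Phi$, and the formula for the change in $\Phi$ under a transposition of $E_1$ and $E_2=(E_1\setminus\{x\})\cup\{y\}$ are all correct, and you have correctly located the difficulty: the partner colour $c'$ is forced on you by the colouring and a single swap need not decrease $\Phi$. Resolving this via an auxiliary digraph $D_{x,y}$ on $C$ (with an arc $\gamma(E)\to\gamma(E')$ for each partner pair $E'=(E\setminus\{x\})\cup\{y\}$, $x\in E$, $y\notin E$) is also the right idea, since $\deg^\gamma_c(x)-\deg^\gamma_c(y)$ equals the out-minus-in imbalance of $c$ in $D_{x,y}$, and walking along an arc-disjoint directed path from $c_0$ to $c_m$ and swapping each pair changes $\Phi$ by exactly $-2(\deg_{c_0}(x)-\deg_{c_0}(y))+2(\deg_{c_m}(x)-\deg_{c_m}(y))+4$ (the intermediate colours cancel, and the pairs along distinct arcs are pairwise disjoint).

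However the step you wave at --- ``a directed walk from $c$ to such a $c''$'' --- is where the actual content lies and you have not established it. A directed walk from $c$ to a \emph{pre-specified} $c''$ with $\deg_{c''}(x)<\deg_{c''}(y)$ need not exist (one can easily arrange $D_{x,y}$ so that $c$ only reaches a clique of balanced colours while the deficient colour sits in a different weak component). What you actually need is the observation that if $R$ is the set of colours reachable from $c$ in $D_{x,y}$, then there are no arcs leaving $R$, hence $\sum_{d\in R}(\deg_d(x)-\deg_d(y))\le 0$; since $c\in R$ contributes $\ge 2$ and has at least one out-arc (so $R\ne\{c\}$), some \emph{reachable} colour $d\in R$ satisfies $\deg_d(x)<\deg_d(y)$, and a directed path from $c$ to that $d$ gives a composite $\theta$ with $\Delta\Phi\le -2$. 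Without this argument, your iteration is not guaranteed to make progress, so as written the proof is incomplete; with it, the argument closes cleanly.
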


In fact, we will only require the following special case of Theorem~\ref{t:almostReg}.

\begin{lemma}\label{l:colouringToSystem}
Let $n$ and $k$ be integers with $n \geq k \geq 1$, let $H$ be a clutter with $|V(H)|=n$, and let $\{X_1,\ldots,X_t\}$ be a partition of $V(H)$ such that any permutation of $X_i$ is an automorphism of $H$ for each $i \in \{1,\ldots,t\}$. Suppose there is an edge colouring $\gamma_0$ of $H$ with colour set $C \cup \{\mathrm{black}\}$ (where $C$ does not contain black) such that, for each $c \in C$, $|\gamma^{-1}_0(c)|=k$ and $\sum_{x \in X_i}\deg^{\gamma_0}_c(x)=|X_i|$ for each $i \in \{1,\ldots,t\}$. Then there is an $(n,k)$-Sperner partition system with $|C|$ partitions such that the classes of the partitions form a subset of $\E(H)$.
\end{lemma}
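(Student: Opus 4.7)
The plan is to derive Lemma~\ref{l:colouringToSystem} as an iterated corollary of Theorem~\ref{t:almostReg}. Starting from $\gamma_0$, I would apply Bryant's theorem once with $Y=X_i$ for each $i\in\{1,\ldots,t\}$ in turn, producing a sequence $\gamma_0,\gamma_1,\ldots,\gamma_t$ of colourings of $H$ (and permutations $\theta_1,\ldots,\theta_t$ of $\E(H)$), where $\gamma_i$ is almost regular on $X_i$. The hypergraph $H$ itself never changes, only the assignment of colours to edges, so the hypothesis that every permutation of $X_i$ is an automorphism of $H$ remains available at each step.

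The key bookkeeping is to show that the hypothesis $\sum_{x\in X_j}\deg^{\gamma_i}_{c}(x)=|X_j|$ survives every iteration for every $j\in\{1,\ldots,t\}$ and every $c\in C$. For $j\neq i$, the defining property $\theta_i(E)\setminus X_i=E\setminus X_i$ combined with $X_i\cap X_j=\emptyset$ gives $\theta_i(E)\cap X_j=E\cap X_j$, so the colour-$c$ contribution at every vertex of $X_j$ is untouched. For $j=i$, the identities $|\theta_i(E)|=|E|$ and $\theta_i(E)\setminus X_i=E\setminus X_i$ together force $|\theta_i(E)\cap X_i|=|E\cap X_i|$, and summing over the $k$ edges of colour $c$ preserves the sum on $X_i$ as well. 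In particular $|\gamma_i^{-1}(c)|=k$ is preserved.

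Once the sum condition is in hand, almost regularity does the real work at stage $i$. For each $c\in C$ the degrees $\deg^{\gamma_i}_c(x)$ for $x\in X_i$ lie within an interval of length at most $1$ and sum to $|X_i|$; since the mean $1$ is already an integer, every such degree must equal exactly $1$. Combined with the preservation observation of the previous paragraph, an induction on $i$ shows that $\deg^{\gamma_t}_c(x)=1$ for every $c\in C$ and every $x\in V(H)$.

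Finally, for each $c\in C$ the colour class $\gamma_t^{-1}(c)$ consists of $k$ edges of $H$ that together cover each vertex of $V(H)$ exactly once, and so is a partition of $V(H)$ into $k$ nonempty classes (edges being nonempty by convention). Because $H$ is a clutter, no edge of $H$ is contained in another, so the $|C|$ partitions obtained in this way form an $(n,k)$-Sperner partition system whose classes all lie in $\E(H)$, as required. I expect the only delicate point to be the bookkeeping verification that earlier $X_j$'s are not disturbed by later applications of Theorem~\ref{t:almostReg}; this is really just the disjointness of the $X_i$ together with the invariance clause $\theta_i(E)\setminus X_i=E\setminus X_i$, but it must be stated carefully so that the inductive hypothesis on the degrees persists through to stage $t$.
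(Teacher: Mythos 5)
Your proof is correct and takes essentially the same approach as the paper's: both iterate Bryant's theorem over the parts $X_1,\ldots,X_t$, using the invariance clause $\theta(E)\setminus Y=E\setminus Y$ together with disjointness of the parts to preserve the degree conditions on every other part, and then using almost-regularity plus the sum-to-$|X_i|$ hypothesis to force every colour-$c$ degree on $X_i$ to equal exactly $1$. You simply spell out a bit more of the bookkeeping (e.g.\ deriving $|\theta(E)\cap X_i|=|E\cap X_i|$ and $\theta(E)\cap X_j=E\cap X_j$ for $j\neq i$) that the paper asserts without explicit derivation.
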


\begin{proof}
Let $X=V(H)$. Roughly speaking, we will perform $t$ applications of Theorem~\ref{t:almostReg}, where on the $i$th application we ``correct'' the colouring on $X_i$. Formally, we will construct a sequence of edge colourings $\gamma_0,\ldots,\gamma_t$ of $H$ with colour set $C \cup \{\hbox{black}\}$ such that, for each $s \in \{0,\ldots,t\}$ and $c \in C$, $|\gamma^{-1}_s(c)|=k$, $\deg^{\gamma_s}_c(x)=1$ for each $x \in \bigcup_{i=1}^s X_i$, and $\sum_{x \in X_i}\deg^{\gamma_s}_c(x)=|X_i|$ for each $i \in \{s+1,\ldots,t\}$. Note that $\gamma_0$ satisfies the claimed conditions. Furthermore, it suffices to find an edge colouring $\gamma_t$ satisfying the required conditions. To see this note that, for each $c \in C$, the edges assigned colour $c$ by $\gamma_t$ form a partition of $X$ into $k$ nonempty classes because the properties of $\gamma_t$ guarantee that $|\gamma^{-1}_t(c)|=k$ and $\deg^{\gamma_t}_c(x)=1$ for each $x \in X$. Thus the non-black colour classes of $\gamma_t$ will induce an $(n,k)$-Sperner partition system with the desired properties (any edges coloured black are not used as partition classes of the system).

Suppose inductively that an edge colouring $\gamma_s$ satisfying the required conditions exists for some $s \in \{0,\ldots,t-1\}$. Now apply Theorem~\ref{t:almostReg} with $Y=X_{s+1}$ to $\gamma_s$, to obtain an edge colouring $\gamma_{s+1}$ of $H$. For each $c \in C$, $|\gamma_{s+1}^{-1}(c)|=|\gamma_s^{-1}(c)|=k$ and $\deg^{\gamma_{s+1}}_c(x)=\deg^{\gamma_{s}}_c(x)$ for each $x \in X \setminus X_{s+1}$. Furthermore, $\deg^{\gamma_{s+1}}_c(x)=1$ for each $c \in C$ and $x \in X_{s+1}$, because $\sum_{x \in X_{s+1}}\deg^{\gamma_{s+1}}_c(x)=|X_{s+1}|$ and $\gamma_{s+1}$ is almost regular on $X_{s+1}$. Thus $\gamma_{s+1}$ satisfies the required conditions and the result follows.
\end{proof}

The next two lemmas show that existing results in \cite{LiMea,MeaMouSte} do not suffice to establish Theorem~\ref{t:asymptotic}. Lemma~\ref{l:NLBNotTight} shows that the lower bound of $\NLB(n,k)$ only implies the conclusion of Theorem~\ref{t:asymptotic} when $r$ is very small compared to $k$, and Lemma~\ref{l:LiMeaNotTight} shows that $\SP(n+k,k) \geq k\cdot\SP(n,k)$ never implies the conclusion of Theorem~\ref{t:asymptotic}.

\begin{lemma}\label{l:NLBNotTight}
For integers $n$ and $k$ with $n > 2k$, $k \geq 3$, and $n \rightarrow \infty$, 
we have
\[\NLB(n,k) \nsim \MMS(n,k)\]
unless $k \rightarrow \infty$ and $r = o(k)$.
\end{lemma}

\begin{proof}
Note that
\[\frac{\NLB(n,k)}{\MMS(n,k)}=\frac{k-r+\frac{r(c+1)}{n-c}}{k}\left(\mfrac{(n-r)_c}{(n)_c}\right) < \frac{k-r+\frac{r(c+1)}{n-c}}{k}.\]
If $k \rightarrow \infty$, then the result follows because $r \neq o(k)$ and $\frac{r(c+1)}{n-c} \leq 1$. If $k \not\rightarrow \infty$ and $r \geq 2$, the result follows because $\frac{r(c+1)}{n-c} \leq 1$. If $k \not\rightarrow \infty$ and $r=1$, then $\frac{r(c+1)}{n-c} \leq \frac{2}{3}$ because $k \geq 3$ and $c \geq 1$, and again the result follows.
\end{proof}

\begin{lemma}\label{l:LiMeaNotTight}
For integers $n$ and $k$ with $n \geq k$, $k \geq 3$ and $n \rightarrow \infty$, we have
\[k \cdot \MMS(n,k) \nsim \MMS(n+k,k).\]
\end{lemma}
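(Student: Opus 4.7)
The plan is to establish the quantitative statement $\MMS(n+k,k) \geq \tfrac{4}{3}\cdot k\cdot \MMS(n,k)$ for every pair $(n,k)$ with $n \geq k \geq 3$; since this bound is uniform in $n$, the ratio $\MMS(n+k,k)/(k\cdot \MMS(n,k))$ cannot tend to $1$ along any sequence with $n \to \infty$.

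First I would observe that if $n = ck+r$ then $n+k = (c+1)k+r$, so the integers associated with $n+k$ are $c+1$ and $r$. Writing $\mu = k-r+\frac{r(c+1)}{n-c}$ and $\mu' = k-r+\frac{r(c+2)}{n+k-c-1}$ for the denominators in the $\MMS$ formula, this gives
\[
\frac{\MMS(n+k,k)}{k\cdot \MMS(n,k)} = \frac{\binom{n+k}{c+1}}{k\binom{n}{c}}\cdot\frac{\mu}{\mu'}.
\]
A short calculation shows $\mu \geq \mu'$: the inequality is equivalent to $(c+1)(n+k-c-1) \geq (c+2)(n-c)$, which on expansion reduces to $k-1 \geq r$, holding by the definition of $r$. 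Hence $\mu/\mu' \geq 1$ and it suffices to bound the binomial factor.

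The principal step is to bound the binomial ratio via an iterated Pascal identity. Applying $\binom{m}{c+1} = \binom{m-1}{c+1}+\binom{m-1}{c}$ for $m = n+k, n+k-1, \ldots, n+1$ telescopes to
\[
\binom{n+k}{c+1} = \binom{n}{c+1} + \sum_{i=0}^{k-1}\binom{n+i}{c} \geq \binom{n}{c+1} + k\binom{n}{c},
\]
where the last step uses $\binom{n+i}{c} \geq \binom{n}{c}$ term by term. Dividing through by $k\binom{n}{c}$ and substituting $\binom{n}{c+1}/\binom{n}{c} = (n-c)/(c+1)$ produces
\[
\frac{\binom{n+k}{c+1}}{k\binom{n}{c}} \geq 1+\frac{n-c}{k(c+1)}.
\]

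To finish, I would lower-bound $\frac{n-c}{k(c+1)}$ by noting that $n-c = (k-1)c+r \geq (k-1)c$ and $c \geq 1$ (since $n \geq k$), whence $\frac{n-c}{k(c+1)} \geq \frac{(k-1)c}{k(c+1)} \geq \frac{k-1}{2k} \geq \frac{1}{3}$ for $k \geq 3$. Combined with $\mu/\mu' \geq 1$, this gives $\frac{\MMS(n+k,k)}{k\cdot \MMS(n,k)} \geq \frac{4}{3}$ uniformly. There is no genuine obstacle here; the argument is essentially a careful bookkeeping, and the only minor subtlety is spotting the right rearrangement of Pascal's identity so that $k\binom{n}{c}$ emerges as an explicit summand of $\binom{n+k}{c+1}$.
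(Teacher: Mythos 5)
Your proof is correct. You share the paper's first step — observing that the $\MMS$ denominator for $(n,k)$ dominates that for $(n+k,k)$ (the calculation reducing to $k-1 \geq r$), so the problem reduces to a pure binomial‑coefficient comparison — but you then diverge. The paper works with the reciprocal ratio $\frac{k\binom{n}{c}}{\binom{n+k}{c+1}}$, writes it with falling factorials, uses $\frac{k(c+1)}{n+k} \leq 1$, and bounds the remaining product $\frac{(n)_c}{(n+k-1)_c}$ by $\bigl(1-\tfrac{k-1}{k(c+2)}\bigr)^c$, finishing with a monotonicity‑in‑$c$ argument to get the constant $\tfrac{25}{36}$. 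You instead lower‑bound $\frac{\binom{n+k}{c+1}}{k\binom{n}{c}}$ additively via the telescoped Pascal identity $\binom{n+k}{c+1} = \binom{n}{c+1} + \sum_{i=0}^{k-1}\binom{n+i}{c} \geq \binom{n}{c+1}+k\binom{n}{c}$, then reduce to bounding $\frac{n-c}{k(c+1)}$ from below. This is more elementary — no falling‑factorial products and no monotonicity‑in‑$c$ claim to justify — at the cost of a slightly weaker constant ($\tfrac{4}{3}$ versus the paper's $\tfrac{36}{25}$). Both constants are bounded away from $1$, so both prove the lemma; your variant trades a bit of sharpness for transparency.
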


\begin{proof}
Let $c=c(n,k)$ and $r=r(n,k)$. Note that
\[\frac{k\cdot\MMS(n,k)}{\MMS(n+k,k)}=\frac{k(c+1)(k-r+\frac{r(c+2)}{n+k-c-1})(n)_c}{\big(k-r+\frac{r(c+1)}{n-c}\big)(n+k)_{c+1}} \leq \mfrac{k(c+1)}{n+k}\left(\mfrac{(n)_c}{(n+k-1)_{c}}\right) \leq \left(1-\tfrac{k-1}{k(c+2)}\right)^c,\]
where we used the fact that $\frac{r(c+2)}{n+k-c-1} \leq \frac{r(c+1)}{n-c}$ in the first inequality and the fact that $\frac{k(c+1)}{n+k} \leq 1$ in the second. Because $\frac{k-1}{k} \geq \frac{2}{3}$, the last expression can be seen to be decreasing in $c$ for $c \geq 2$ and hence at most $\frac{25}{36}$.
\end{proof}

We conclude this section with a product construction for Sperner partition systems which generalises the inductive result of Li and Meagher mentioned in the introduction.

\begin{lemma}\label{l:productConstruction}
If $m$, $n$ and $k$ are positive integers such that $m \geq k$ and $n \geq k$, then \[\SP(m+n,k) \geq k\cdot\SP(m,k)\cdot\SP(n,k).\]
\end{lemma}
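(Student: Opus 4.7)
The plan is a straightforward ``twisted product'' construction. Fix disjoint sets $M$ and $N$ of sizes $m$ and $n$, an $(m,k)$-Sperner partition system $\A$ on $M$ with $|\A|=\SP(m,k)$, and an $(n,k)$-Sperner partition system $\B$ on $N$ with $|\B|=\SP(n,k)$. Writing the partitions of $\A$ and $\B$ as $\{A^s_0,\ldots,A^s_{k-1}\}$ for $s=1,\ldots,\SP(m,k)$ and $\{B^t_0,\ldots,B^t_{k-1}\}$ for $t=1,\ldots,\SP(n,k)$, I would assemble, for each triple $(s,t,j)$ with $j\in\Z_k$, the partition
\[ P^{s,t,j}=\{A^s_i\cup B^t_{i+j}:i\in\Z_k\} \]
of $M\cup N$, where $i+j$ is taken modulo $k$. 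This produces $k\cdot\SP(m,k)\cdot\SP(n,k)$ candidate partitions, so the lemma reduces to showing these partitions are pairwise distinct and that the resulting collection is a Sperner partition system.

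Both facts would follow from a single containment analysis. Suppose $A^s_i\cup B^t_{i+j}\subseteq A^{s'}_{i'}\cup B^{t'}_{i'+j'}$. Since $M$ and $N$ are disjoint, this separates into $A^s_i\subseteq A^{s'}_{i'}$ and $B^t_{i+j}\subseteq B^{t'}_{i'+j'}$. The Sperner property of $\A$ forbids the first containment when $s\neq s'$; and when $s=s'$, the fact that classes within a single partition are disjoint and nonempty forces $i=i'$. Applying the same reasoning to $\B$ gives $t=t'$ and $i+j=i'+j'$, so also $j=j'$. Hence the two classes coincide, which simultaneously establishes that no class of any $P^{s,t,j}$ is contained in a class of a different $P^{s',t',j'}$ and that distinct triples $(s,t,j)$ yield distinct partitions.

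I do not expect a serious obstacle here; the only delicate point is the containment case analysis, which silently relies on every class being nonempty (guaranteed by $m\geq k$ and $n\geq k$) so that $A^s_i\subseteq A^s_{i'}$ really does force $i=i'$. Combined with the count $k\cdot\SP(m,k)\cdot\SP(n,k)$, this yields the claimed lower bound. As a sanity check, the special case $m=k$ gives $\SP(k,k)=1$ and recovers the inductive inequality $\SP(n+k,k)\geq k\cdot\SP(n,k)$ of Li and Meagher mentioned in the introduction.
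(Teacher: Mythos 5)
Your construction is the same twisted product used in the paper (with $P^{s,t,j}$ playing the role of the paper's $\sigma_{i,j,y}$), and the containment analysis is identical; your write-up is in fact slightly more explicit than the paper's in separating the case $s\neq s'$ (handled by the Sperner property) from $s=s'$ (handled by disjointness of nonempty classes). Correct, and essentially the paper's own proof.
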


\begin{proof}
Let $X$ and $Y$ be disjoint sets with $|X|=m$ and $|Y|=n$. Let $p=\SP(m,k)$ and let $\P=\{\pi_1,\ldots,\pi_p\}$ be an $(m,k)$-Sperner partition system on $X$ with $p$ partitions, where $\pi_i=\{\pi_{i,1},\ldots,\pi_{i,k}\}$ for $i \in \{1,\ldots,p\}$. Let $q=\SP(n,k)$ and let $\mathcal{Q}=\{\rho_1,\ldots,\rho_q\}$ be an $(n,k)$-Sperner partition system on $Y$ with $q$ partitions, where $\rho_j=\{\rho_{j,1},\ldots,\rho_{j,k}\}$ for $j \in \{1,\ldots,q\}$. We claim that
\[\big\{\sigma_{i,j,y}: i\in\{1,\ldots,p\}, j\in\{1,\ldots,q\}, y \in \{1,\ldots,k\}\big\}\]
where
\[\sigma_{i,j,y}=\big\{\pi_{i,z} \cup \rho_{j,z+y}: z \in \{1,\ldots,k\}\big\}\]
(with the second component of the subscripts treated modulo $k$) is an $(m+n,k)$-Sperner partition system with $kpq$ partitions. To see that this claim is true, suppose that $\pi_{i,z} \cup \rho_{j,z+y} \subseteq \pi_{i',z'} \cup \rho_{j',z'+y'}$ for some $i,i' \in \{1,\ldots,p\}$, $j,j' \in \{1,\ldots,q\}$ and $y,z,y',z' \in \{1,\ldots,k\}$. Because $X$ and $Y$ are disjoint, $\pi_{i,z} \subseteq \pi_{i',z'}$ and $\rho_{j,z+y} \subseteq \rho_{j',z'+y'}$. So, because $\P$ and $\mathcal{Q}$ are Sperner partition systems, $i=i'$, $z=z'$, $j=j'$ and, because $z=z'$, $y=y'$. This establishes the claim and hence the theorem.
\end{proof}

\section{Main construction}\label{s:mainConstruction}

The following technical lemma will be useful in our constructions. It enables us to partition the edges of certain uniform hypergraphs into triples that are ``balanced'' in some sense.

\begin{lemma}\label{l:compTriples}
Let $t$ be a positive integer, let $H$ be a nonempty $(2t)$-uniform hypergraph with $V(H)=X$, and let $Y$ be a subset of $X$. Suppose that there are nonnegative integers $e_0,\ldots,e_t$ such that
\begin{itemize}
    \item[\textup{(i)}]
$|\{E \in \E(H):|E\cap Y|=t+i\}|=|\{E \in \E(H):|E\cap Y|=t-i\}|=e_{i}$ for each $i \in \{0,\ldots,t\}$;
    \item[\textup{(ii)}]
$e_i \geq e_{i+1}+s$ for each $i \in \{0,\ldots,s-1\}$ where $s$ is the largest element of $\{0,\ldots,t\}$ such that $e_s > 0$.
\end{itemize}
For any $p \in \{0,\ldots,\lfloor \frac{1}{3}|\E(H)| \rfloor\}$, we can partition some subset $\E^*$ of $\E(H)$ into $p$ (unordered) triples such that
\begin{itemize}
    \item
$\sum_{i=1}^3|E_i \cap Y|=3t$ for each triple $\{E_1,E_2,E_3\}$; and
    \item
$|\E^*_i|=|\E^*_{-i}|$ for each $i \in \{1,\ldots,t\}$, where $\E^*_i=\{E \in \E^*:|E\cap Y|=t+i\}$.
\end{itemize}
\end{lemma}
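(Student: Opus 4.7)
My plan is to reformulate each edge $E$ by its \emph{deviation} $d(E) := |E \cap Y| - t$, which takes values in $\{-s,\ldots,s\}$. The requirement $\sum_{i=1}^3|E_i \cap Y| = 3t$ becomes $d(E_1)+d(E_2)+d(E_3)=0$, and the requirement $|\E^*_i| = |\E^*_{-i}|$ becomes that the multiset of deviations in $\E^*$ is symmetric about $0$. By hypothesis~(i), I can match each deviation-$(+i)$ edge with a deviation-$(-i)$ edge to obtain $e_i$ ``pairs'' of type $i$ for $i \in \{1,\ldots,s\}$, and I call the $e_0$ deviation-$0$ edges ``singletons''.

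I would then restrict attention to three canonical triple-families, each of which automatically preserves the required symmetry of $\E^*$: (A) a $(0,0,0)$ triple, using three singletons to produce one triple; (B) an $(i,-i,0)$ triple, using one pair of type $i$ and one singleton to produce one triple; and (C) a $\pm$-twin $\{(i,j,-i-j),\,(-i,-j,i+j)\}$ with $1 \leq i \leq j$ and $i+j \leq s$, using one pair of each of types $i$, $j$ and $i+j$ to produce two triples. If the numbers used are $\alpha$, $\beta$ and $\gamma$ respectively, then the total number of triples is $p = \alpha+\beta+2\gamma$, with $3\alpha+\beta$ singletons and $\beta+3\gamma$ pairs consumed.

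If $p \leq e_0$, I would take $\gamma=0$; the existence of nonnegative integers $\alpha,\beta$ with $\alpha+\beta=p$, $\beta \leq \sum_{i \geq 1}e_i$ and $3\alpha+\beta \leq e_0$ follows easily from $3p \leq |\E(H)|$, and arbitrary pairs and singletons may be used. If $p > e_0$, I must take $\gamma > 0$, and the plan is: use all $e_0$ singletons in (B) triples with carefully chosen counts $\beta_i$ (the number of type-$i$ pairs used in those (B) triples), then form $\gamma = \tfrac{1}{2}(p-e_0)$ type-(C) twin-blocks from the residual pair counts $f_i := e_i - \beta_i$. Parity issues (when $p - e_0$ is odd) and small divisibility slack are handled by exchanging one (B) triple for an (A) triple, which is possible because condition~(ii) forces $e_0 \geq s+1 \geq 3$.

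The main technical obstacle is realising the $\gamma$ twin-blocks concretely, since each demands pairs drawn from three specific types $\{i,j,i+j\}$. I plan to prove this by descending induction on $s$: choose the $\beta_i$ so that the residual counts $f_i$ retain a gradient close to the original condition~(ii), then peel off twin-blocks of the form $(1,s-1,-s)$ (or similar top-level Schur triples) until the type-$s$ pairs are exhausted, at which point the residual counts on types $\{1,\ldots,s-1\}$ satisfy an analogous weakened condition~(ii) and the inductive hypothesis applies. Condition~(ii)'s gap of at least $s$ between consecutive $e_i$ provides exactly the headroom needed to prevent depletion of the type-$1$ or type-$(s-1)$ reserves during the peeling, and the careful bookkeeping of this process is the most delicate part of the argument.
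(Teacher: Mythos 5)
Your framework is essentially the same as the paper's: you work with the deviation $|E\cap Y|-t$ (the paper calls it the \emph{type}), observe that hypothesis~(i) lets you pair deviation $+i$ with $-i$, and you build $\E^*$ from Schur-type triples $\{i,j,-(i+j)\}$ together with their mirror images. Your family~(C), taken with $i'=\min(i,s-i)$, $j'=\max(i,s-i)$, is exactly the pattern $[-s,i,s-i]$, $[s,-i,i-s]$ that the paper uses for $s\geq 3$, and your families~(A), (B) match the paper's $s\in\{0,1,2\}$ patterns. So the building blocks coincide. The \emph{organisation} differs: the paper runs a single induction on $|\E(H)|$, removing a carefully bounded batch of initial triples (the bound $\min(e_s,\lfloor p/2\rfloor,\cdot)$ being chosen so that the residual hypergraph demonstrably still satisfies the weakened version of~(ii) needed at the next step), whereas you attempt a direct two-phase construction (allocate singletons to~(B) triples via $\beta_i$, then realise the $\gamma$ twin-blocks from the residuals $f_i$) with a separate descent on $s$ for the second phase.

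The genuine gap is precisely the step you flag as ``the most delicate part'': you never establish that a suitable $(\beta_i)$ exists, nor that the resulting $f_i$ support $\gamma$ twin-blocks. This is not mere bookkeeping---it is the content of the lemma. Two concrete problems. First, the peeling rule ``$(1,s-1,-s)$ until type-$s$ is exhausted'' does not in general respect your own invariant: when $s=2$ the only twin-block is $(1,1,-2)$, which consumes type-$1$ pairs at twice the rate of type-$2$ pairs, and condition~(ii) only guarantees $e_1\geq e_2+2$, not $e_1\geq 2e_2$; so a literal ``exhaust type-$s$'' pass can drive $f_1$ negative, and what rescues the construction is that the budget $\gamma$ is globally small (bounded by $(e_1+e_2-e_0)/3$)---a fact you never invoke. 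Second, the assertion that one can choose $\beta_i$ with $\sum\beta_i=e_0$, $0\leq\beta_i\leq e_i$, so that the residuals ``retain a gradient close to~(ii)'' is unproved and not obvious: a uniform $\beta_i$ violates $\beta_s\leq e_s$ when $e_s$ is small, a proportional choice flattens the gradient, and the correct allocation has to be coupled to the twin-block demand. (A smaller issue: swapping one (B) triple for one (A) triple changes the singleton consumption $3\alpha+\beta$ by $2$ but leaves $p-e_0\pmod 2$ unchanged, so it does not by itself repair the parity of $\gamma$; you actually need to leave a singleton unused.) The paper avoids all of this by folding the ``realisation'' problem into the induction hypothesis: it removes at most $O(s)$ initial triples per level, verifies explicitly that $|d_i-d_j|\leq 1$ for the removed counts, and lets~(ii) (or its weakened form when type $\pm s$ is exhausted) do the work at the next level.
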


\begin{proof}
We prove the result by induction on $|\E(H)|$. In fact, we prove a slightly stronger result in which we do not require the full strength of (ii) when $p=1$ but only that $e_0 \geq 1$ (note $|\E(H)| \geq 3$ when $p=1$). Let $s$ be the largest element of $\{0,\ldots,t\}$ such that $e_s > 0$. Let the \emph{type} of an edge $E$ of $H$ be $|E\cap Y|-t$ and the \emph{type} of a triple be the multiset $[x_1,x_2,x_3]$ where $x_1,x_2,x_3$ are the types of the three edges in the triple. If $p=0$ the result is trivial. If $p=1$, we can take a single triple of type $[-s,0,s]$, because $|\E(H)| \geq 3$ and $e_0 > 0$. So we may assume $p \geq 2$.  In each of a number of cases below we first choose some initial triples of specified types and then add the remaining triples (if any are required) by applying our inductive hypothesis to the hypergraph $H'$ formed by the unassigned edges. The edges in the initial triples can be chosen arbitrarily subject to their specified type.

\begin{center}
\begin{tabular}{l|l}
  case & initial triples \\ \hline
  $s=0$ & $[0,0,0]$ \\
  $s=1$ & $[-1,0,1]$ \\
  $s=2$, ($e_2=1$ or $p=2$) & $[-2,1,1]$ and $[2,-1,-1]$ \\
  $s=2$, $e_2\geq2$, $p \geq 3$ & $[-2,0,2]$, $[-2,1,1]$ and $[2,-1,-1]$ \\
  $s \geq 3$ odd & $[-s,i,s-i]$ and $[s,-i,i-s]$ for $i \in \{1,\ldots,\min(e_s,\lfloor\frac{p}{2}\rfloor,\frac{s-1}{2})\}$ \\
  $s \geq 4$ even & $[-s,i,s-i]$ and $[s,-i,i-s]$ for $i \in \{1,\ldots,\min(e_s,\lfloor\frac{p}{2}\rfloor,s-1)\}$ \\
\end{tabular}
\end{center}

If $s \in \{0,1,2\}$, then using (i) and (ii) it is easy to confirm that we can choose triples of the types listed and then apply our inductive hypothesis to find the rest of the triples, so assume $s \geq 3$. For each $i \in \{-s,\ldots,s\}$, let $d_i$ be the number of edges of type $i$ that are in the initial triples. Let $b=\frac{s-1}{2}$ if $s$ is odd, let $b = s-1$ if $s$ is even, and let $b'=\min(e_s,\lfloor\frac{p}{2}\rfloor)$.
\begin{itemize}
     \item
If $b' > b$, then $d_0=0$, $d_{-s}=d_s=b$ and $d_i=\frac{2b}{s-1}$ for each $i \in \{-s+1,\ldots,s-1\} \setminus \{0\}$. Using this fact, along with (i) and (ii), it can be confirmed that we can choose triples of the types listed and then apply our inductive hypothesis to find the rest of the triples.
    \item
If $b' \leq b$, then $d_0=0$, $d_{-s}=d_s=b'$ and $d_i \in \{\lfloor \frac{2b'}{s-1} \rfloor, \lceil \frac{2b'}{s-1} \rceil\}$ for each $i \in \{-s+1,\ldots,s-1\} \setminus \{0\}$.  Using this fact, along with (i) and (ii), it can be confirmed that we can choose triples of the types listed and then apply our inductive hypothesis to find the rest of the triples. To see this, note the following.
\begin{itemize}
    \item
If $e_s \leq \lfloor\frac{p}{2}\rfloor$, then $H'$ contains no edges of type $s$ or $-s$, so the condition (ii) required to apply our inductive hypothesis is weaker. Because of this, the fact that $|d_i-d_j| \leq 1$ for $i,j \in \{0,\ldots,s-1\}$ is sufficient to establish this condition.
    \item
If $\lfloor\frac{p}{2}\rfloor < e_s$, then we only require one further triple and so the fact that $e_0 \geq 1$ suffices to establish our inductive hypothesis. \qedhere
\end{itemize}
\end{itemize}
\end{proof}

The next, very simple, lemma will be used to show that condition (ii) of Lemma~\ref{l:compTriples} holds in the situations in which it is applied.

\begin{lemma}\label{l:binomProduct}
Let $n$ and $t$ be positive integers such that $n \geq 6t-2$ is even, and let $e_i=\binom{n/2}{t-i}\binom{n/2}{t+i}$ for each $i \in \{0,\ldots,t\}$. Then $e_i > e_{i+1}+t$ for each $i \in \{0,\ldots,t-1\}$.
\end{lemma}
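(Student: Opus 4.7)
The plan is to compute $e_i - e_{i+1}$ exactly as a single ratio of binomial coefficients, and then bound it below in an elementary way using the hypothesis $n \geq 6t-2$.

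First I would set $m = n/2$, so that $e_i = \binom{m}{t-i}\binom{m}{t+i}$ and $m \geq 3t-1$. Using the standard recurrence for binomial coefficients, the ratio
\[
\frac{e_{i+1}}{e_i} = \frac{(t-i)(m-t-i)}{(t+i+1)(m-t+i+1)}
\]
is immediate. The key algebraic step will be to observe, by direct expansion, that
\[
(t+i+1)(m-t+i+1) - (t-i)(m-t-i) = (2i+1)(m+1),
\]
and therefore
\[
e_i - e_{i+1} = e_i \cdot \frac{(2i+1)(m+1)}{(t+i+1)(m-t+i+1)}.
\]
Applying the identity $(m+1)\binom{m}{t+i} = (t+i+1)\binom{m+1}{t+i+1}$ to absorb the factor $(m+1)/(t+i+1)$ then gives the clean form
\[
e_i - e_{i+1} = \frac{(2i+1)\binom{m}{t-i}\binom{m+1}{t+i+1}}{m-t+i+1}.
\]

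Finally I would bound each factor. Since $i \in \{0,\ldots,t-1\}$, we have $t-i \in \{1,\ldots,t\}$ and $t+i+1 \in \{1,\ldots,2t\}$, and the hypothesis $m \geq 3t-1$ places these ranges in $\{1,\ldots,m-1\}$ and $\{1,\ldots,m\}$ respectively, so unimodality of the binomial coefficients yields $\binom{m}{t-i} \geq m$ and $\binom{m+1}{t+i+1} \geq m+1$. Combined with $2i+1 \geq 1$ and $m-t+i+1 \leq m$, this gives $e_i - e_{i+1} \geq m+1 \geq 6t-1 > t$, as required. The only non-routine step is spotting the algebraic identity that collapses the numerator to $(2i+1)(m+1)$; once that is in hand, the subsequent estimates are extremely loose and present no real obstacle.
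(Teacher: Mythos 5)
Your proof is correct, up to one small numerical slip: from $n\geq 6t-2$ you get $m=n/2\geq 3t-1$, hence $m+1\geq 3t$, not $m+1\geq 6t-1$ as written. Since $3t>t$ for $t\geq1$, the conclusion $e_i-e_{i+1}\geq m+1>t$ still stands, so the slip is harmless. I verified the key algebraic identity $(t+i+1)(m-t+i+1)-(t-i)(m-t-i)=(2i+1)(m+1)$ and the absorption identity $(m+1)\binom{m}{t+i}=(t+i+1)\binom{m+1}{t+i+1}$; both check out, and the unimodality bounds $\binom{m}{t-i}\geq m$, $\binom{m+1}{t+i+1}\geq m+1$ hold on the stated index ranges because $m\geq 3t-1\geq 2t$.

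Your route is genuinely different from the paper's, though both begin with the same ratio $e_i/e_{i+1}=\frac{(t+i+1)(m-t+i+1)}{(t-i)(m-t-i)}$. The paper lower-bounds this ratio by the $i$-free quantity $1+\frac{n+2}{t(n-2t)}$ and then separately shows $e_{i+1}\geq t^2$, which forces a special base case at $t=1$ and requires checking $e_{i+1}\geq t^2$ both for $i\leq t-2$ and for the boundary case $i=t-1$ (where $e_t=\binom{n/2}{2t}$). You instead collapse the \emph{difference} $e_i-e_{i+1}$ into a single closed product $\frac{(2i+1)\binom{m}{t-i}\binom{m+1}{t+i+1}}{m-t+i+1}$ via the identity in your ``key algebraic step,'' after which one uniform, very loose bound finishes every case at once. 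Your version is cleaner: it needs no base case, no auxiliary bound on $e_{i+1}$, and it even delivers the stronger estimate $e_i-e_{i+1}\geq m+1$ (rather than just $>t$). The paper's version has the minor advantage that the ratio bound $1+\frac{n+2}{t(n-2t)}$ is immediate once one spots that the expression is increasing as $i$ varies, without needing to notice the numerator identity, but on balance your argument is tighter and shorter.
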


\begin{proof}
The result holds when $t=1$, so assume that $t \geq 2$. Let $i \in \{0,\ldots,t-1\}$. By routine calculation
\[e_i=\tfrac{(t+i+1)(n-2t+2i+2)}{(t-i)(n-2t-2i)}e_{i+1} \geq \tfrac{(t+1)(n-2t+2)}{t(n-2t)}e_{i+1}=\left(1+\tfrac{n+2}{t(n-2t)}\right)e_{i+1}.\]
Thus it suffices to show that $e_{i+1} \geq t^2$ because then $\tfrac{n+2}{t(n-2t)}e_{i+1} > t$. If $i \in \{0,\ldots,t-2\}$, then $e_{i+1} \geq t^2$ because $\binom{n/2}{t-i-1}\binom{n/2}{t+i+1} \geq \frac{n}{2}\cdot\frac{n}{2} \geq t^2$. Also, $e_t = \binom{n/2}{2t} \geq \binom{n/2}{2} \geq t^2$ because $n \geq 6t-2$.
\end{proof}

The following lemma encapsulates the main construction used in our proof of Theorem~\ref{t:asymptotic}. Recall that $c=c(n,k)$ and $r=r(n,k)$ are the integers such that $n=ck+r$ and $r \in \{0,\ldots,k-1\}$.

\begin{lemma}\label{l:mainConstruction}
Let $n$ and $k$ be integers such that $n \geq 2k$, $k \geq 3$, $r \neq 0$, and $n$ and $ck$ are both even. Let $u \in \{1,\ldots,\lfloor\frac{c}{2}\rfloor\}$ such that $u=\frac{c}{2}$ if $r=k-1$. There exists an almost uniform $(n,k)$-Sperner partition system with $p$ partitions where
\[p=\min\left(\left\lfloor\mfrac{a(u)}{k-r}\right\rfloor,\left\lfloor\mfrac{b(u)}{r}\right\rfloor\right),\quad a(u) = \sum_{i = u}^{c-u}\mbinom{n/2}{i}\mbinom{n/2}{c-i}, \quad
b(u) = 2\sum_{i = 0}^{u-1}\mbinom{n/2}{i}\mbinom{n/2}{c+1-i}.\]
\end{lemma}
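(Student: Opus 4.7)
Let $X$ be the $n$-set, fix $Y \subseteq X$ with $|Y| = n/2$, and consider the hypergraph $H$ on $X$ whose edges are the $c$-subsets $E$ with $|E \cap Y| \in \{u, \ldots, c-u\}$ together with the $(c+1)$-subsets $E$ with $|E \cap Y| \in \{0, \ldots, u-1\} \cup \{c+2-u, \ldots, c+1\}$; there are $a(u)$ of the former and $b(u)$ of the latter. The plan is to apply Lemma~\ref{l:colouringToSystem} with $X_1 = Y$ and $X_2 = X \setminus Y$: the gap between the two intersection ranges rules out any containment between a $c$-edge and a $(c+1)$-edge, so $H$ is a clutter, and permutations of $Y$ (or of $X \setminus Y$) preserve intersection sizes and hence act as automorphisms of $H$, as the lemma requires.

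Each non-black colour class of the target colouring $\gamma_0$ will contain $k-r$ $c$-edges and $r$ $(c+1)$-edges; since $|Y| = n/2$, the degree-sum condition amounts to the $c$-edges contributing $c(k-r)/2$ to $\sum |E \cap Y|$ and the $(c+1)$-edges contributing $r(c+1)/2$, both of which are integers because $r$ and $c(k-r)$ are even under the hypotheses. I handle the $(c+1)$-edges uniformly: the $Y \leftrightarrow X \setminus Y$ symmetry equates the counts of intersection-$j$ and intersection-$(c+1-j)$ edges, so I pair them off into $b(u)/2$ pairs each summing to $c+1$ and allocate $r/2$ pairs per class, using $pr \leq b(u)$ edges in total. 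The $c$-edges require a parity split on $k - r$. When $k-r$ is even, I pair every intersection-$j$ edge with an intersection-$(c-j)$ edge (each pair summing to $c$) and deposit $(k-r)/2$ pairs per class. When $k-r$ is odd, $ck$ even forces $c$ even, and I invoke Lemma~\ref{l:compTriples} on the $c$-uniform hypergraph of $c$-edges to extract $p$ triples summing to $3c/2$ each, with condition (i) immediate from the $Y$-symmetry of $H$ and condition (ii) supplied by Lemma~\ref{l:binomProduct} (the required bound $n \geq 3c - 2$ is automatic from $n \geq ck \geq 3c$). The balanced-remainder guarantee $|\E^*_i| = |\E^*_{-i}|$ leaves the uncovered $c$-edges pairable as before, and each colour class then gets one triple plus $(k-r-3)/2$ pairs for a total intersection sum of $c(k-r)/2$. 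The corner case $k - r = 1$ is forced into $r = k-1$ and (by hypothesis) $u = c/2$, so every $c$-edge has intersection exactly $c/2$ and can be dropped singly into a class alongside $(k-1)/2$ pairs of $(c+1)$-edges.

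The main obstacle is the odd-$(k-r)$ step, where I need to verify condition (ii) of Lemma~\ref{l:compTriples} via Lemma~\ref{l:binomProduct} on the truncated intersection range imposed by $u$ and to confirm that the $p$ triples together with the residual pairs can be packaged into exactly $p$ colour classes of the required profile. Once $\gamma_0$ is built, Lemma~\ref{l:colouringToSystem} converts it into the advertised almost uniform $(n,k)$-Sperner partition system with $p$ partitions.
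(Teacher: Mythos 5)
Your proposal is correct and follows essentially the same approach as the paper's proof: identical choice of $\mathcal{A}$ and $\mathcal{B}$ via intersection sizes with one half of a bipartition, the same reduction to an edge-colouring via Lemma~\ref{l:colouringToSystem}, pairing the $(c+1)$-edges across the $Y\leftrightarrow X\setminus Y$ symmetry, and (when $k-r$ is odd) invoking Lemma~\ref{l:compTriples} with $t=c/2$ via Lemma~\ref{l:binomProduct}, with the $r=k-1$ corner case handled separately. Your organization by parity of $k-r$ coincides exactly with the paper's split by parity of $k$ (since $r$ is forced even), so the case structure is the same.
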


\begin{proof}
Note that $r$ is even because $n$ and $ck$ are both even. Fix $u \in \{1,\ldots,\lfloor\frac{c}{2}\rfloor\}$ and let $a=a(u)$ and $b=b(u)$. Let $X_1$ and $X_2$ be disjoint sets such that $|X_1|=|X_2|=\frac{n}{2}$, and let $X=X_1 \cup X_2$. For each $(i,j)\in \mathbb{N} \times \mathbb{N}$, let
\[\E_{(i,j)}=\{E \subseteq X: |E \cap X_1|=i, |E \cap X_2|=j\}\]
and note $|\E_{(i,j)}|=\binom{n/2}{i}\binom{n/2}{j}$. Let
\begin{align*}
\mathcal{A}&=\medop\bigcup_{(i,j) \in I'}\E_{(i,j)}, & \mbox{where } I'&=\{(i,j) \in \mathbb{N} \times \mathbb{N}: i+j=c, \min(i,j) \geq u\} \\
\mathcal{B}&=\medop\bigcup_{(i,j) \in I''}\E_{(i,j)}, &\mbox{where } I''&=\{(i,j) \in \mathbb{N} \times \mathbb{N}: i+j=c+1, \min(i,j) \leq u-1\}.
\end{align*}
Note that $|\mathcal{A}|=a$ and $|\mathcal{B}|=b$. Furthermore, no set in $\mathcal{A}$ is a subset of a set in $\mathcal{B}$ because, for any $A \in \mathcal{A}$ and $B \in \mathcal{B}$, $|A \cap X_i| \geq u > |B \cap X_i|$ for some $i \in \{1,2\}$. So the hypergraph $H$ with vertex set $X$ and edge set $\mathcal{A} \cup \mathcal{B}$ is a clutter. Let $C$ be a set of $p$ colours other than black. Observe that, for each $i \in \{1,2\}$, any permutation of $X_i$ is an automorphism of $H$. Thus, by Lemma~\ref{l:colouringToSystem}, it suffices to find an edge colouring $\gamma$ of $H$ with colour set $C \cup \{\hbox{black}\}$ such that, for each $c \in C$, $|\gamma^{-1}(c)|=k$ and $\sum_{x \in X_i}\deg^\gamma_c(x)=\frac{n}{2}$ for each $i \in \{1,2\}$. Note that the resulting Sperner partition system will be almost uniform because each edge in $H$ has size $c$ or $c+1$. Call a set of edges $\E' \subseteq \E(H)$ \emph{compatible} if $\sum_{E \in \E'}|E \cap X_1|=\sum_{E \in \E'}|E \cap X_2|$.

\smallskip\noindent\textbf{Case 1.} Suppose that $k$ is even. Then each partition in an almost uniform $(n,k)$-Sperner partition system contains an even number, $k-r$, of classes of cardinality $c$ and an even number, $r$, of classes of cardinality $c+1$.

Because $|\E_{(i,j)}|=|\E_{(j,i)}|$ for each $(i,j) \in I''$, we can partition $\mathcal{B}$ into $\frac{b}{2}$ compatible pairs. Also, $|\E_{(i,j)}|=|\E_{(j,i)}|$ for each $(i,j) \in I'$ and, if $c$ is even, a pair of edges from $\E_{c/2,c/2}$ is compatible. Thus, we can find $\lfloor \frac{a}{2} \rfloor$ disjoint compatible pairs in $\mathcal{A}$ (one edge in $\E_{c/2,c/2}$ will be unpaired in the case where $c$ is even and $|\E_{c/2,c/2}|$ is odd, and all edges will be paired otherwise).

Take an edge colouring $\gamma$ of $H$ with colour set $C \cup \{\hbox{black}\}$ such that each non-black colour class contains $r$ edges in $\mathcal{B}$ that form $\frac{r}{2}$ compatible pairs and $k-r$ edges in $\mathcal{A}$ that form $\frac{k-r}{2}$ compatible pairs, and all remaining edges are coloured black. This can be accomplished because $\frac{r}{2}p \leq \frac{r}{2}\lfloor\frac{b}{r}\rfloor \leq \frac{b}{2}$ and $\frac{k-r}{2}p \leq \frac{k-r}{2}\lfloor\frac{a}{k-r}\rfloor \leq \lfloor \frac{a}{2} \rfloor$. Observe that for each $c \in C$ we have that $\sum_{x \in X}\deg^\gamma_c(x)=r(c+1)+(k-r)c=n$ and, because the colour class can be partitioned into compatible pairs, $\sum_{x \in X_1}\deg^\gamma_c(x)=\sum_{x \in X_2}\deg^\gamma_c(x)$. Thus, as desired, we have that $\sum_{x \in X_i}\deg^\gamma_c(x)=\frac{n}{2}$ for each $c \in C$ and $i \in \{1,2\}$.

\smallskip\noindent\textbf{Case 2.} Suppose that $k$ is odd, $c$ is even, and $r \neq k-1$. Then each partition in an almost uniform $(n,k)$-Sperner partition system contains an odd number, $k-r$, of classes of cardinality $c$ and an even number, $r$, of classes of cardinality $c+1$. Apply Lemma~\ref{l:compTriples} with $Y=X_1$, $t=\frac{c}{2}$, and $e_i=|\E_{(t-i,t+i)}|=|\E_{(t+i,t-i)}|$ for each $i \in \{0,\ldots,t\}$ to find $p$ disjoint triples of edges in $\A$. The hypotheses of Lemma~\ref{l:compTriples} can be seen to be satisfied using Lemma~\ref{l:binomProduct} and because $p \leq \lfloor\frac{a}{k-r}\rfloor \leq  \lfloor\frac{a}{3}\rfloor$ since $k-r \geq 3$. Note that each triple given by Lemma~\ref{l:compTriples} is compatible, and that the number of edges in $\E_{(i,j)}$ assigned to triples is equal to the number of edges in $\E_{(j,i)}$ assigned to triples for each $(i,j) \in I'$. Thus we can partition all, or all but one, of the unassigned edges in $\A$ into $\lfloor \frac{a-3p}{2} \rfloor$ compatible pairs. Take an edge colouring $\gamma$ of $H$ with colour set $C \cup \{\hbox{black}\}$ such that each non-black colour class contains $r$ edges in $\mathcal{B}$ that form $\frac{r}{2}$ compatible pairs and $k-r$ edges in $\mathcal{A}$ that form one compatible triple and $\frac{k-r-3}{2}$ compatible pairs, and all remaining edges are coloured black. This can be accomplished because $\frac{r}{2}p \leq \frac{r}{2}\lfloor\frac{b}{r}\rfloor \leq \frac{b}{2}$ and $\frac{k-r-3}{2}p \leq \lfloor \frac{a-3p}{2} \rfloor$ (since $\frac{k-r-3}{2}p \leq \frac{k-r}{2}\lfloor\frac{a}{k-r}\rfloor-\frac{3p}{2} \leq \frac{a-3p}{2}$ and $\frac{k-r-3}{2}p$ is an integer). Then $\gamma$ has the properties we desire.

\smallskip\noindent\textbf{Case 3.} Suppose that $k$ is odd, $c$ is even, and $r = k-1$. Then $u=\frac{c}{2}$ by our hypotheses and $\A=\E_{c/2,c/2}$. Let $\gamma$ be an edge colouring of $H$ with colour set $C \cup \{\hbox{black}\}$ such that each non-black colour class contains $k-1$ edges in $\mathcal{B}$ that form $\frac{k-1}{2}$ compatible pairs and one edge in $\mathcal{A}$. Again $\gamma$ has the properties we desire.
\end{proof}

To extend the approach of Lemma~\ref{l:mainConstruction} to cases where $n$ is even and $ck$ is odd would involve finding complementary triples of edges in $\B$. This can be difficult because the edges in $\B$ are ``unbalanced'' in terms of the sizes of their intersections with $X_1$ and $X_2$. To circumvent this problem we will introduce, in Section~\ref{s:asymptoticProof}, a variation on our construction in which the edges in $\B$ are ``balanced''. First, however, we show that, when $c$ is large and $r \neq k-1$, the lower bound implied by Lemma~\ref{l:mainConstruction} asymptotically matches the $\MMS(n,k)$ upper bound, recalling that
\[\MMS(n,k) = \frac{\binom{n}{c}}{k-r+\frac{r(c+1)}{n-c}}.\]

\begin{proof}[\textbf{\textup{Proof of Theorem~\ref{t:asymptotic} when $\bm{n}$ and $\bm{ck}$ are even.}}]
By our hypotheses, $r \neq k-1$. Furthermore, $\SP(n,k)=\MMS(n,k)$ when $r=0$, so we may assume $2 \leq r < k-1$. Let $a(j)$ and $b(j)$ be as defined in Lemma~\ref{l:mainConstruction} for each $j \in \{1,\ldots,\lfloor\frac{c}{2}\rfloor\}$, and additionally define $a(0)=\binom{n}{c}$, $b(0)=0$, $a(\lfloor\frac{c}{2}\rfloor+1)=0$, and $b(\lfloor\frac{c}{2}\rfloor+1)=\binom{n}{c+1}$.  For each $j \in \{0,\ldots,\lfloor\frac{c}{2}\rfloor+1\}$, let $a_j=\lfloor\frac{a(j)}{k-r}\rfloor$ and $b_j=\lfloor\frac{b(j)}{r}\rfloor$. Note that $a_0 \geq \cdots \geq a_{\lfloor c/2\rfloor+1}=0$, $0=b_0 \leq \cdots \leq b_{\lfloor c/2\rfloor+1}$, $a_0 > b_0$ and $a_{\lfloor c/2\rfloor+1} < b_{\lfloor c/2\rfloor+1}$. Let $w$ be the unique integer in $\{0,\ldots,\lfloor\frac{c}{2}\rfloor\}$ such that $a_{w+1} \leq b_{w+1}$ and $a_{w} > b_{w}$. By applying Lemma~\ref{l:mainConstruction} with $u=w+1$ (or trivially if $w=\lfloor\frac{c}{2}\rfloor$) we have $\SP(n,k) \geq a_{w+1}$, and by applying Lemma~\ref{l:mainConstruction} with $u=w$ (or trivially if $w=0$) we have $\SP(n,k) \geq b_{w}$. Furthermore, one of these bounds is the best bound achievable via Lemma~\ref{l:mainConstruction} because $a_{w+1} \geq \cdots \geq a_{\lfloor c/2\rfloor+1}$ and $b_0 \leq \cdots \leq b_{w}$. By definition of the function $a$, we have $a(w+1)=a(w)-\delta\binom{n/2}{w}\binom{n/2}{c-w}$, where $\delta=2$ if $w<\frac{c}{2}$ and $\delta=1$ if $w=\frac{c}{2}$.
Hence
\begin{equation}\label{e:jumpSize}
\SP(n,k) \geq a_{w+1} = \left\lfloor\frac{a(w)-\delta\binom{n/2}{w}\binom{n/2}{c-w}}{k-r}\right\rfloor \geq a_{w}-\frac{\delta\binom{n/2}{w}\binom{n/2}{c-w}}{k-r}-1.
\end{equation}

We will bound $a_{w}$ and then apply \eqref{e:jumpSize}. We now show that
\begin{equation}\label{e:coverage}
(c+1)b(w)=(n-c)\left(\mbinom{n}{c}-a(w)\right)-\delta'(n-2w+2)\mbinom{n/2}{w-1}\mbinom{n/2}{c-w+1},
\end{equation}
where $\delta'=1$ if $w \geq 1$ and $\delta'=0$ if $w=0$. We may assume $w \geq 1$, for otherwise $w=0$, $b(w)=0$, $a(w)=\binom{n}{c}$ and \eqref{e:coverage} holds. Now apply
Lemma~\ref{l:mainConstruction} with $u=w$, let $\A$ and $\B$ be as defined in its proof, and let $\A^c=\binom{X}{c} \setminus \A$. Note that $|\A|=a(w)$, $|\B|=b(w)$ and $|\A^c|=\binom{n}{c}-a(w)$. We now count, in two ways, the number of pairs $(S,B)$ such that $S \in \A^c$, $B \in \B$ and $S \subseteq B$.
\begin{itemize}
    \item
Each of the $b(w)$ sets in $\B$ has exactly $c+1$ subsets in $\binom{X}{c}$ and each of these is in $\A^c$, because no set in $\A$ is a subset of a set in $\B$.
    \item
By the definition of $\mathcal{A}$, $\min(|S \cap X_1|,|S \cap X_2|) \leq w-1$ for each $S \in \A^c$. Each of the $\binom{n}{c}-a(w)$ sets in $\A^c$ has $n-c$ supersets in $\binom{X}{c+1}$.  For each $S \in \A^c$ such that $\min(|S \cap X_1|,|S \cap X_2|) \leq w-2$, all of these supersets of $S$ are in $\B$. For each of the $2\binom{n/2}{w-1}\binom{n/2}{c-w+1}$ sets $S \in \A^c$ such that $\min(|S \cap X_1|,|S \cap X_2|) = w-1$, exactly $\frac{n}{2}-w+1$ of these supersets of $S$ are not in $\B$.
\end{itemize}
Equating our two counts, we see that \eqref{e:coverage} does indeed hold.

Because $a_w > b_w$, we have $\lfloor\frac{a(w)}{k-r}\rfloor > \lfloor\frac{b(w)}{r}\rfloor$ which implies $\frac{a(w)}{k-r} > \frac{b(w)}{r}$ or equivalently $b(w) < \frac{r}{k-r}a(w)$. Substituting this into \eqref{e:coverage} and solving for $a(w)$ we see
\begin{equation}\label{e:coverage2}
a(w)>\frac{(k-r)\big(\binom{n}{c}-\frac{\delta'(n-2w+2)}{n-c}\binom{n/2}{w-1}\binom{n/2}{c-w+1}\big)}{k-r+\frac{r(c+1)}{n-c}}.
\end{equation}
Using $a_w > \frac{a(w)}{k-r}-1$ and \eqref{e:coverage2}  in \eqref{e:jumpSize} we obtain
\[
\SP(n,k)>\frac{\binom{n}{c} -\frac{\delta'(n-2w+2)}{n-c}\binom{n/2}{w-1}\binom{n/2}{c-w+1}}{k-r+\frac{r(c+1)}{n-c}} -\frac{\delta\binom{n/2}{w}\binom{n/2}{c-w}}{k-r}-2,
\]
or, equivalently,
\begin{equation}\label{e:boundDiff2}
\SP(n,k)>\frac{\binom{n}{c}- \tfrac{\delta'(n-2w+2)}{n-c}\binom{n/2}{w-1}\binom{n/2}{c-w+1} -\delta\big(1+\frac{r(c+1)}{(n-c)(k-r)}\big)\binom{n/2}{w}\binom{n/2}{c-w}}{k-r+\frac{r(c+1)}{n-c}} -2.
\end{equation}
In the above, note that $\delta' \leq 1$, $\delta  \leq 2$, $\tfrac{n-2w+2}{n-c} \leq \frac{3}{2}$ when $w \geq 1$ because $k \geq 3$, and $\tfrac{r(c+1)}{(n-c)(k-r)} \leq 1$ because $r \leq k-1$.

Note that $\binom{n/2}{x}\binom{n/2}{c-x} \leq \binom{n/2}{\lfloor c/2 \rfloor}\binom{n/2}{\lceil c/2 \rceil}$ for any $x \in \{0,\ldots,c\}$. By using this fact and then applying Stirling's approximation we have, for $n \rightarrow \infty$ with $k=o(n)$ and any $x \in \{0,\ldots,c\}$,
\[\mbinom{n/2}{x}\mbinom{n/2}{c-x}\Big/\mbinom{n}{c} \leq \sqrt{\tfrac{2n}{\pi c(n-c)}}(1+o(1)) \leq \sqrt{\tfrac{2k}{\pi c(k-1)}}(1+o(1)) = o(1)\]
(note that $n \rightarrow \infty$ with $k=o(n)$ implies $c \rightarrow \infty$). Applying this fact twice in \eqref{e:boundDiff2} yields $\SP(n,k)>\MMS(n,k)(1-o(1))$.  Combined with the fact that $\SP(n,k) \leq \MMS(n,k)$, this establishes the result.
\end{proof}

\section{Proof of Theorem~\ref{t:asymptotic}}\label{s:asymptoticProof}

As discussed after Lemma~\ref{l:mainConstruction}, we require a variation on our main construction in order to complete the proof of Theorem~\ref{t:asymptotic}.

\begin{lemma}\label{l:altConstruction}
Let $n$ and $k$ be integers such that $n \geq 2k$, $k \geq 3$, $n$ is even and $ck$ is odd. Let $u \in \{\frac{c+1}{2}, \ldots, c - 1\}$ be such that $u=\frac{c+1}{2}$ if $r=1$. There exists an almost-uniform $(n,k)$-Sperner partition system with $p$ partitions where
\[p=\min\left(\left\lfloor\mfrac{a(u)}{k-r}\right\rfloor,\left\lfloor\mfrac{b(u)}{r}\right\rfloor\right),\quad a(u) = 2\sum_{i=u+1}^c\mbinom{n/2}{i}\mbinom{n/2}{c-i}, \quad
b(u) = \sum_{i=c+1-u}^u\mbinom{n/2}{i}\mbinom{n/2}{c+1-i}.\]
\end{lemma}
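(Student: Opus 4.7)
The strategy is to mirror the proof of \lref{l:mainConstruction} but with the roles of the $c$-sets and $(c+1)$-sets swapped: here the $c$-sets in $\A$ will be the \emph{unbalanced} ones (with $\max(|\cdot\cap X_1|, |\cdot\cap X_2|)\geq u+1$) while the $(c+1)$-sets in $\B$ will be the \emph{balanced} ones (with $\max(|\cdot\cap X_1|, |\cdot\cap X_2|)\leq u$). Since $n$ is even and $ck$ is odd, both $c$ and $k$ are odd; hence $r$ is odd while $k-r$ is even. Let $X_1,X_2$ be disjoint sets with $|X_1|=|X_2|=n/2$, set $X=X_1\cup X_2$, and define $\E_{(i,j)}=\{E\subseteq X:|E\cap X_1|=i,\ |E\cap X_2|=j\}$. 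Let
\[
\A=\medop\bigcup_{(i,j)\in I'}\E_{(i,j)},\qquad \B=\medop\bigcup_{(i,j)\in I''}\E_{(i,j)},
\]
where $I'=\{(i,j)\in\N\times\N:i+j=c,\ \max(i,j)\geq u+1\}$ and $I''=\{(i,j)\in\N\times\N:i+j=c+1,\ \max(i,j)\leq u\}$. Then $|\A|=a(u)$ (the factor $2$ in $a(u)$ accounts for the two symmetric halves of $I'$) and $|\B|=b(u)$. For any $A\in\A$ and $B\in\B$ there exists $i\in\{1,2\}$ with $|A\cap X_i|\geq u+1>u\geq|B\cap X_i|$, so $A\not\subseteq B$; thus $H=(X,\A\cup\B)$ is a clutter on which every permutation of $X_1$ or $X_2$ acts as an automorphism. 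By \lref{l:colouringToSystem} it suffices to construct an edge colouring $\gamma$ of $H$ with colour set $C\cup\{\mathrm{black}\}$, $|C|=p$, in which every non-black class has size $k$ and is \emph{compatible}, meaning its $k$ edges collectively cover $n/2$ elements of each of $X_1$ and $X_2$.

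Each such class will consist of $k-r$ edges from $\A$ (an even number) and $r$ from $\B$ (an odd number). Since $c$ is odd, $\A$ has no self-symmetric slice $\E_{(i,c-i)}$ with $i=c-i$, and the identity $|\E_{(i,c-i)}|=|\E_{(c-i,i)}|$ yields a perfect partition of $\A$ into $a(u)/2$ compatible pairs, from which each colour class can take $(k-r)/2$ pairs as $(k-r)p\leq a(u)$. The odd parity of $r$ is the real source of difficulty, and I handle it in two subcases paralleling Cases 2 and 3 of the proof of \lref{l:mainConstruction}. If $r=1$, the hypothesis forces $u=(c+1)/2$, so $\B=\E_{((c+1)/2,(c+1)/2)}$ and each edge of $\B$ is compatible on its own; each colour class simply uses one such edge, with feasibility following from $p\leq b(u)$ and $(k-1)p\leq a(u)$. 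If $r\geq 3$, each colour class will instead take one compatible triple in $\B$ together with $(r-3)/2$ compatible pairs in $\B$.

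To obtain the $p$ required compatible triples, I apply \lref{l:compTriples} to the $(c+1)$-uniform hypergraph with vertex set $X$ and edge set $\B$, taking $Y=X_1$, $t=(c+1)/2$, and $e_i=\binom{n/2}{t-i}\binom{n/2}{t+i}$ for $0\leq i\leq u-(c+1)/2$ (and $e_i=0$ otherwise). Condition~(i) of \lref{l:compTriples} is immediate from binomial symmetry, condition~(ii) follows from \lref{l:binomProduct}, which applies because $n\geq 6t-2=3c+1$ (indeed $n=ck+r\geq 3c+1$ since $k\geq 3$ and $r\geq 1$), and the requirement $p\leq\lfloor b(u)/3\rfloor$ holds since $p\leq\lfloor b(u)/r\rfloor$ and $r\geq 3$. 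After the triples are extracted, the conclusion $|\E^*_i|=|\E^*_{-i}|$ of \lref{l:compTriples} ensures that the residual edges balance across each symmetric pair of slices $(\E_{(i,c+1-i)},\E_{(c+1-i,i)})$, so the remaining $b(u)-3p$ edges partition into $\lfloor(b(u)-3p)/2\rfloor$ compatible pairs (possibly leaving one orphan in $\E_{(t,t)}$). The integer inequality $\tfrac{r-3}{2}p\leq\lfloor(b(u)-3p)/2\rfloor$ reduces to $rp\leq b(u)$, which holds by the choice of $p$. Combining the $\A$-pairs with the $\B$-triples and $\B$-pairs and colouring all other edges black gives the required $\gamma$.

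The main technical obstacle is verifying condition~(ii) of \lref{l:compTriples} in this new setting via \lref{l:binomProduct}, along with the bookkeeping that confirms the triples really do leave the remaining edges of $\B$ pairable within each symmetric pair of slices. Once these points are settled, the rest of the argument is a routine parity calculation in the style of the proof of \lref{l:mainConstruction}.
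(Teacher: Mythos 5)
Your proposal is correct and follows essentially the same route as the paper's proof: the same choice of $\A$ (unbalanced $c$-sets) and $\B$ (balanced $(c+1)$-sets), the same two cases ($r=1$ directly, $r\geq 3$ via an application of Lemma~\ref{l:compTriples} to $\B$ with $Y=X_1$ and $t=\tfrac{c+1}{2}$), and the same feasibility checks for the pairs and triples via Lemma~\ref{l:binomProduct}. The small extra verifications you supply (that $n\geq 3c+1$, and the reduction of the integer inequality to $rp\leq b(u)$) are accurate and match the paper's implicit bookkeeping.
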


\begin{proof}
Note that each partition in an almost uniform $(n,k)$-Sperner partition system contains an even number, $k-r$, of classes of cardinality $c$ and an odd number, $r$, of classes of cardinality $c+1$.

Fix $u \in \{\frac{c+1}{2}, \ldots, c - 1\}$ and let $a=a(u)$ and $b=b(u)$. Let $X_1$ and $X_2$ be disjoint sets such that $|X_1|=|X_2|=\frac{n}{2}$, and let $X=X_1 \cup X_2$. As in the proof of Lemma~\ref{l:mainConstruction}, for each $(i,j)\in \mathbb{N} \times \mathbb{N}$, let
\[\E_{(i,j)}=\{E \subseteq X: |E \cap X_1|=i, |E \cap X_2|=j\}.\]
Unlike the proof of Lemma~\ref{l:mainConstruction}, let
\begin{align*}
\mathcal{A}&=\medop\bigcup_{(i,j) \in I'}\E_{(i,j)}, & \mbox{where } I'&=\{(i,j) \in \mathbb{N} \times \mathbb{N}: i+j=c, \max(i,j) \geq u+1\} \\
\mathcal{B}&=\medop\bigcup_{(i,j) \in I''}\E_{(i,j)}, &\mbox{where } I''&=\{(i,j) \in \mathbb{N} \times \mathbb{N}: i+j=c+1, \max(i,j) \leq u\}.
\end{align*}
Note that $|\mathcal{A}|=a$ and $|\mathcal{B}|=b$. Furthermore, no set in $\mathcal{A}$ is a subset of a set in $\mathcal{B}$ because, for any $A \in \mathcal{A}$ and $B \in \mathcal{B}$, $|A \cap X_i|>u\geq|B \cap X_i|$ for some $i \in \{1,2\}$. Thus the hypergraph $H$ with vertex set $X$ and edge set $\mathcal{A} \cup \mathcal{B}$ is a clutter. Observe that, for each $i \in \{1,2\}$, any permutation of $X_i$ is an automorphism of $H$. Let $C$ be a set of $p$ colours other than black. By Lemma~\ref{l:colouringToSystem}, it suffices to find an edge colouring $\gamma$ of $H$ with colour set $C \cup \{\hbox{black}\}$ such that, for each $c \in C$, $|\gamma^{-1}(c)|=k$ and $\sum_{x \in X_i}\deg^\gamma_c(x)=\frac{n}{2}$ for each $i \in \{1,2\}$. Note that the resulting Sperner partition system will be almost uniform because each edge in $H$ has size $c$ or $c+1$. Again, call a set of edges $\E' \subseteq \E(H)$ \emph{compatible} if $\sum_{E \in \E'}|E \cap X_1|=\sum_{E \in \E'}|E \cap X_2|$.

\smallskip\noindent\textbf{Case 1.} Suppose that $r \neq 1$. Apply Lemma~\ref{l:compTriples} with $Y=X_1$, $t=\frac{c+1}{2}$, and $e_i=|\E_{(t-i,t+i)}|=|\E_{(t+i,t-i)}|$ for each $i \in \{0,\ldots,t\}$ to find $p$ disjoint triples of edges in $\B$. The hypotheses of Lemma~\ref{l:compTriples} can be seen to be satisfied using Lemma~\ref{l:binomProduct} and because $p \leq \lfloor\frac{b}{r}\rfloor \leq  \lfloor\frac{b}{3}\rfloor$ since $r \geq 3$. Note that each triple given by Lemma~\ref{l:compTriples} is compatible, and that the number of edges in $\E_{(i,j)}$ assigned to triples is equal to the number of edges in $\E_{(j,i)}$ assigned to triples for each $(i,j) \in I''$. Thus we can partition all, or all but one, of the unassigned edges in $\B$ into $\lfloor \frac{b-3p}{2} \rfloor$ compatible pairs. Take an edge colouring $\gamma$ of $H$ with colour set $C \cup \{\hbox{black}\}$ such that each non-black colour class contains $r$ edges in $\mathcal{B}$ that form one compatible triple and $\frac{r-3}{2}$ compatible pairs and $k-r$ edges in $\mathcal{A}$ that form $\frac{k-r}{2}$ compatible pairs, and all remaining edges are coloured black. This can be accomplished because $\frac{k-r}{2}p \leq \frac{k-r}{2}\lfloor\frac{a}{k-r}\rfloor \leq \frac{a}{2}$ and $\frac{r-3}{2}p \leq \lfloor\frac{b-3p}{2}\rfloor$ (note that $\frac{r-3}{2}p$ is an integer less than or equal to $\frac{r}{2}\lfloor\frac{b}{r}\rfloor-\frac{3p}{2}$). Then $\gamma$ has the properties we desire.

\smallskip\noindent\textbf{Case 2.} Suppose that $r = 1$. Then $u=\frac{c+1}{2}$ by our hypotheses and $\B=\E_{(c+1)/2,(c+1)/2}$. Take an edge colouring $\gamma$ of $H$ with colour set $C \cup \{\hbox{black}\}$ such that each non-black colour class contains one edge in $\mathcal{B}$ and $k-1$ edges in $\A$ that form $\frac{k-1}{2}$ compatible pairs. Again $\gamma$ has the properties we desire.
\end{proof}

The approach of Lemma~\ref{l:altConstruction} can also be applied when $n$ and $k$ are both even. However, computational evidence indicates that this approach almost always underperforms Lemma~\ref{l:mainConstruction}. We can now prove the remainder of Theorem~\ref{t:asymptotic}.

\begin{proof}[\textbf{\textup{Proof of Theorem~\ref{t:asymptotic}.}}]
We saw in Section~\ref{s:mainConstruction} that Theorem~\ref{t:asymptotic} holds when $n$ and $ck$ are both even. Here, we first use Lemma~\ref{l:altConstruction} to deal with almost all of the remaining cases where $n$ is even, and then use the monotonicity of $\SP(n,k)$ in $n$ to complete the rest of the proof.

\smallskip\noindent\textbf{Case 1.} Suppose that $n$ is even, $ck$ is odd, and $r \neq 1$. The proof is very similar to the proof in the case where $n$ and $ck$ are even, but we highlight the differences.

Let $a(j)$ and $b(j)$ be as defined in Lemma~\ref{l:altConstruction} for each $j \in \{\frac{c+1}{2},\ldots,c-1\}$, and additionally define $a(\frac{c-1}{2})=\binom{n}{c}$, $b(\frac{c-1}{2})=0$, $a(c)=0$, and $b(c)=\binom{n}{c+1}-2\binom{n/2}{c+1}$. For each $j \in \{\frac{c-1}{2}, \ldots, c\}$, let $a_j=\lfloor\frac{a(j)}{k-r}\rfloor$ and $b_j=\lfloor\frac{b(j)}{r}\rfloor$. Note that $a_{(c-1)/2} \geq \cdots \geq a_{c} = 0$, $0 = b_{(c-1)/2} \leq \cdots \leq b_{c}$, $a_{(c-1)/2} > b_{(c-1)/2}$ and $a_{c} < b_{c}$. Let $w$ be the unique integer in $\{\frac{c-1}{2},\ldots,c-1\}$ such that $a_{w} > b_{w}$ and $a_{w+1} \leq b_{w+1}$. By applying Lemma~\ref{l:altConstruction} with $u=w+1$ (or trivially if $w=c-1$) we have $\SP(n,k) \geq a_{w+1}$. By definition of $a$, we have $a(w+1)=a(w)-2\binom{n/2}{w+1}\binom{n/2}{c-w-1}$ and hence
\begin{equation}\label{e:jumpSizeAlt}
\SP(n,k) \geq a_{w+1} = \left\lfloor\frac{a(w)-2\binom{n/2}{w+1}\binom{n/2}{c-w-1}}{k-r}\right\rfloor \geq a_{w}-\frac{2\binom{n/2}{w+1}\binom{n/2}{c-w-1}}{k-r}-1.
\end{equation}

We now show that
\begin{equation}\label{e:coverageAlt}
(c+1)b(w)=(n-c)\left(\mbinom{n}{c}-a(w)\right)-\delta'(n-2w)\mbinom{n/2}{w}\mbinom{n/2}{c-w},
\end{equation}
where $\delta'=1$ if $w \geq \frac{c+1}{2}$ and $\delta'=0$ if $w = \frac{c-1}{2}$. We may assume $w \geq \frac{c+1}{2}$, for otherwise $w=\frac{c-1}{2}$, $b(w)=0$, $a(w)=\binom{n}{c}$ and \eqref{e:coverageAlt} holds. Consider applying Lemma~\ref{l:altConstruction} with $u=w$, let $\A$ and $\B$ be as defined in its proof, and let $\A^c=\binom{X}{c} \setminus \A$. Note that $|\A|=a(w)$, $|\B|=b(w)$ and $|\A^c|=\binom{n}{c}-a(w)$. We now count, in two ways, the number of pairs $(S,B)$ such that $S \in \A^c$, $B \in \B$ and $S \subseteq B$.
\begin{itemize}
    \item
Each of the $b(w)$ sets in $\B$ has exactly $c+1$ subsets in $\binom{X}{c}$ and each of these is in $\A^c$, because no set in $\A$ is a subset of a set in $\B$.
    \item
By the definition of $\mathcal{A}$, $\max(|S \cap X_1|,|S \cap X_2|) \leq w$ for each $S \in \A^c$. Each of the $\binom{n}{c}-a(w)$ sets in $\A^c$ has $n-c$ supersets in $\binom{X}{c+1}$.  For each $S \in \A^c$ such that $\max(|S \cap X_1|,|S \cap X_2|) \leq w-1$, all of these supersets of $S$ are in $\B$. For each of the $2\binom{n/2}{w}\binom{n/2}{c-w}$ sets $S \in \A^c$ such that $\max(|S \cap X_1|,|S \cap X_2|) = w$, exactly $\frac{n}{2}-w$ of these supersets of $S$ are not in $\B$.
\end{itemize}
By equating our two counts, \eqref{e:coverageAlt} holds.

Using \eqref{e:jumpSizeAlt} and \eqref{e:coverageAlt} in place of \eqref{e:jumpSize} and \eqref{e:coverage}, it is now routine to obtain the desired conclusion by following the argument from the case of the proof where $n$ and $ck$ are even.

\smallskip\noindent\textbf{Case 2.} Suppose that $n$ is odd, or that $n$ is even and $r=1$. By our hypotheses, $k-r \rightarrow \infty$. Note that
\[\frac{\MMS(n-1,k)}{\MMS(n,k)} = \frac{k-r+\frac{r(c+1)}{n-c}}{k-r+1+\frac{(r-1)(c+1)}{n-c-1}}\cdot\mfrac{n-c}{n} \geq  \mfrac{k-r}{k-r+1}\cdot\mfrac{k-1}{k} = 1-o(1),\]
where the first inequality follows because $\frac{r(c+1)}{n-c} \geq \frac{(r-1)(c+1)}{n-c-1}$ and $\frac{n-c}{n} \geq \frac{k-1}{k}$ and the second equality follows because $k-r \rightarrow \infty$. Hence we have $\MMS(n-1,k)=\MMS(n,k)(1-o(1))$. Thus, if $\SP(n-1,k)=\MMS(n-1,k)(1-o(1))$, we have
\begin{equation}\label{e:monotonicityChain}
\SP(n,k) \geq \SP(n-1,k)=\MMS(n-1,k)(1-o(1))=\MMS(n,k)(1-o(1)).
\end{equation}
If $n$ is even and $r=1$, we have $\SP(n-1,k)=\MMS(n-1,k)$ from \cite{MeaMouSte} and thus \eqref{e:monotonicityChain} definitely holds. So the theorem holds in all the cases where $n$ is even. But, having established this, we may assume that $n$ is odd and we know that $\SP(n-1,k)=\MMS(n-1,k)(1-o(1))$. Hence the proof is complete, using \eqref{e:monotonicityChain}.
\end{proof}

\section{Proof of Theorem~\ref{t:upperBound}}\label{s:upperBoundProof}

Let $X$ be a ground set, let $\S$ be a family of subsets of $X$, and let $i$ be an integer. If each set in $\S$ has cardinality at least $i$, then we define $\Delta^i(\S)$ to be the family of all sets in $\binom{X}{i}$ that are subsets of some set in $\S$. Similarly, if each set in $\S$ has cardinality at most $i$, then we define the $\nabla^i(\S)$ to be the family of all sets in $\binom{X}{i}$ that are supersets of some set in $\S$.

The following theorem, due to Lov\'{a}sz \cite[p. 95]{Lov}, gives a convenient approximation to the Kruskal-Katona theorem (see \cite{Kat,Kru} for the original theorem).

\begin{theorem}[\cite{Lov}]\label{t:LovaszKK}
If $i \geq 2$ is an integer, $X$ is a set and $\S \subseteq \binom{X}{i}$, then
$|\Delta^{i-1}(\S)| \geq \L_i(|\S|)$.
\end{theorem}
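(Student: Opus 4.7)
The plan is to derive the inequality from the exact Kruskal--Katona theorem by comparing the integer cascade shadow bound against the real-valued function $\L_i$. First, I would write $|\S|$ in its unique $i$-cascade representation $|\S| = \binom{a_i}{i} + \binom{a_{i-1}}{i-1} + \cdots + \binom{a_j}{j}$ with integers $a_i > a_{i-1} > \cdots > a_j \geq j \geq 1$, so that Kruskal--Katona gives $|\partial^{i-1}(\S)| \geq \sum_{\ell=j}^i \binom{a_\ell}{\ell-1}$. Letting $q \geq i$ be the real number with $\binom{q}{i} = |\S|$, so that $a_i \leq q < a_i+1$ and $\L_i(|\S|) = \binom{q}{i-1}$, the theorem reduces to the purely numerical inequality
\[\sum_{\ell=j}^i \binom{a_\ell}{\ell-1} \geq \binom{q}{i-1}. \qquad (\star)\]

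I would establish $(\star)$ by induction on the number of terms in the cascade. The base case of a single term ($j = i$) gives equality at $q = a_i$. For the inductive step, let $q' \geq i$ satisfy $\binom{q'}{i} = |\S| - \binom{a_j}{j}$; the inductive hypothesis applied to the truncated cascade $\binom{a_i}{i} + \cdots + \binom{a_{j+1}}{j+1}$ gives $\sum_{\ell=j+1}^i \binom{a_\ell}{\ell-1} \geq \binom{q'}{i-1}$, so the task reduces to the slice estimate $\binom{a_j}{j-1} \geq \binom{q}{i-1} - \binom{q'}{i-1}$. Treating $\binom{y}{i-1}$ as a differentiable function $\Psi$ of $x = \binom{y}{i}$, the mean value theorem yields $\binom{q}{i-1} - \binom{q'}{i-1} = \Psi'(\xi) \binom{a_j}{j}$ for some $\xi \in (\binom{q'}{i}, \binom{q}{i})$, so it suffices to verify the uniform slope bound $\Psi'(\xi) \leq j/(a_j - j + 1)$ throughout this interval.

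The main obstacle is this slope bound. Direct computation yields
\[\Psi'(x) = \frac{i}{y-i+1}\cdot\frac{\sum_{s=0}^{i-2}(y-s)^{-1}}{\sum_{s=0}^{i-1}(y-s)^{-1}}\]
(where $y$ is determined by $\binom{y}{i} = x$), so the desired inequality becomes $j(y-i+1)\sum_{s=0}^{i-1}(y-s)^{-1} \geq i(a_j-j+1)\sum_{s=0}^{i-2}(y-s)^{-1}$ for every $y \in [q', q]$. A naive bound of $\frac{i}{y-i+1}$ by $\frac{j}{a_j-j+1}$ fails when $j < i$, so the suppressing factor $\sum_{s=0}^{i-2}(y-s)^{-1}/\sum_{s=0}^{i-1}(y-s)^{-1} < 1$ must carry the argument. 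Exploiting the interlacing $a_j \leq a_i - (i-j) \leq y - (i-j)$ of the cascade entries, the inequality can be rearranged into a telescoping rational expression which I would verify term by term, pairing each factor $(y-s)^{-1}$ against one of the cascade gaps $a_{\ell+1} - a_\ell$. Once this slope bound is secured, the induction closes and the theorem follows.
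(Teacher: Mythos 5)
The paper offers no proof of Theorem~\ref{t:LovaszKK}; the result is cited to Lov\'asz, whose standard argument is a direct compression/induction on the set family (fix a vertex, split $\S$ by membership of that vertex, shift so the shadow of one part lies in the other, and recurse) and never passes through the cascade form of Kruskal--Katona. Your proposal instead tries to dominate the integer cascade shadow sum by $\L_i$, which would be a genuinely different route if it worked, but the crucial \emph{uniform} slope bound $\Psi'(\xi)\le j/(a_j-j+1)$ on $\bigl(\binom{q'}{i},\binom{q}{i}\bigr)$ is false. Because $\L_i$ is concave, $\Psi'$ is decreasing, so the binding case is the left endpoint $y=q'$. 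Take $i=3$ and the cascade $\binom{3}{3}+\binom{2}{2}+\binom{1}{1}$, so $j=1$, $a_1=1$, and the truncation gives $\binom{q'}{3}=2$, i.e.\ $q'\approx 3.44$; then
\[
\Psi'\!\left(\binom{q'}{3}\right)=\frac{3}{q'-2}\cdot\frac{\frac{1}{q'}+\frac{1}{q'-1}}{\frac{1}{q'}+\frac{1}{q'-1}+\frac{1}{q'-2}}\approx 1.05>1=\frac{j}{a_j-j+1}.
\]
The slice estimate $\binom{a_j}{j-1}\ge\binom{q}{i-1}-\binom{q'}{i-1}$ does still hold in this example (the secant slope is about $0.97$), but that is exactly the point: concavity makes the tangent slope at the left endpoint strictly larger than the secant, and in tight cascades the loss from that replacement exceeds the margin you have. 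The interlacing $a_j\le a_i-(i-j)$ you invoke is already an equality here, so it cannot repair the estimate. To close the induction along this route one would have to bound the secant $\binom{q}{i-1}-\binom{q'}{i-1}$ directly (for instance by integrating $\Psi'$ over the whole interval) rather than by a pointwise slope bound; as written, the mean-value step does not go through.
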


Recall that the function $\L_i$ was defined just prior to the statement of Theorem~\ref{t:upperBound} in the introduction. It will be important for our purposes that, for a fixed integer $i \geq 2$, $\L_i(x)$ is monotonically increasing and concave in $x$ for $x \geq 1$ (see \cite[Lemma 4]{Buk}). We will make use of the following simple consequence of Theorem~\ref{t:LovaszKK}.

\begin{lemma}\label{l:shadowSize}
Let $H$ be a clutter with edge set $\E$, and $c$ be a positive integer such that $|E| \geq c$ for each $E \in \E$. Then $|\Delta^{c}(\E)| \geq \min(|\E|,\binom{2c+1}{c}+1)$.
\end{lemma}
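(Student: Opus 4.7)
The plan is to split into cases based on the maximum edge size in $\E$.

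First, suppose some edge $E^* \in \E$ has $|E^*| \geq 2c+1$. Since $\binom{2c+2}{c} = \binom{2c+1}{c}+\binom{2c+1}{c-1} \geq \binom{2c+1}{c}+1$, the inequality $|\partial^c(\E)| \geq \binom{|E^*|}{c} \geq \binom{2c+1}{c}+1$ is immediate when $|E^*| \geq 2c+2$. When $|E^*|=2c+1$, we instead have $|\partial^c(\{E^*\})| = \binom{2c+1}{c}$, which already beats the required minimum if $|\E|=1$; and if $|\E| \geq 2$, I would take any second edge $E' \in \E$, note that $E' \not\subseteq E^*$ by the clutter property, pick a vertex $v \in E' \setminus E^*$, and select any $c$-subset $S$ of $E'$ containing $v$. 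This $S$ lies in $\partial^c(\E) \setminus \partial^c(\{E^*\})$, pushing the total to $\binom{2c+1}{c}+1$.

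Second, suppose every edge of $\E$ has size at most $2c$. I would then apply Hall's marriage theorem to the bipartite graph $B$ with parts $\E$ and $\partial^c(\E)$, where $E \sim S$ iff $S \subseteq E$. If $B$ admits a matching saturating $\E$, then $|\partial^c(\E)| \geq |\E|$ and we are done. Otherwise Hall supplies a subfamily $\F \subseteq \E$ with $|\partial^c(\F)| < |\F|$. Because a size-$c$ edge of a clutter is not contained in any larger edge, the sets $\partial^c(\E_c)$ and $\partial^c(\E_{>c})$ are disjoint, so dropping the size-$c$ edges from $\F$ preserves the violation. We may therefore assume every edge of $\F$ has size in $\{c+1,\ldots,2c\}$, and it now suffices to show $|\partial^c(\F)| \geq \binom{2c+1}{c}+1$, since $\partial^c(\F) \subseteq \partial^c(\E)$.

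For this final piece, I would examine the uniform levels $\F_i$ of $\F$. For each $i$ with $\F_i \neq \emptyset$, write $|\F_i|=\binom{x_i}{i}$ for a real $x_i \geq i$ and apply Theorem~\ref{t:LovaszKK} iteratively to obtain $|\partial^c(\F_i)| \geq \binom{x_i}{c}$. Since $\binom{x_i}{c}<\binom{x_i}{i}$ holds exactly when $x_i > c+i$, a strict Hall-violation at any individual level $i$ forces $x_i > c+i \geq 2c+1$, yielding $|\partial^c(\F_i)| > \binom{2c+1}{c}$ and hence $|\partial^c(\F)| \geq \binom{2c+1}{c}+1$. If no individual level violates Hall, each $|\partial^c(\F_i)| \geq |\F_i|$, and the global shortfall $|\partial^c(\F)| < |\F|$ must come from overlaps between the $c$-shadows at different levels. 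I would then invoke the Kruskal--Katona equality case: the top level $\F_M$ must lie in some $(2c+1)$-vertex support $Y$, and a Sperner-type bound inside $Y$ gives $|\F| \leq \binom{2c+1}{c}$, so if every edge of $\F$ lay in $Y$ no Hall-violation would be possible; consequently some edge of $\F$ uses a vertex outside $Y$ and contributes a $c$-subset outside $\binom{Y}{c}$. The main obstacle I expect is this last extremal step: the level-by-level Kruskal--Katona bounds do not by themselves control shadow overlaps, so one must combine them with the clutter structure across different edge sizes to extract the extra ``$+1$''.
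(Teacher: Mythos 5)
Your first case (some edge of size at least $2c+1$) is correct, and your Hall's-theorem reduction in the second case is sound as far as it goes: you do correctly show that any Hall-violating subfamily can be assumed to have all edge sizes in $\{c+1,\ldots,2c\}$, using the observation that $\partial^c(\E_c)=\E_c$ is disjoint from $\partial^c(\E_{>c})$ by the clutter property. But the final step --- showing $|\partial^c(\F)|\geq\binom{2c+1}{c}+1$ for such an $\F$ --- is a genuine gap, and you already sense this yourself. The level-by-level Kruskal--Katona bounds tell you nothing once the $c$-shadows of the various levels $\F_i$ start overlapping, and overlap is exactly what must occur if each level satisfies Hall individually yet $\F$ violates Hall globally. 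Your appeal to ``the Kruskal--Katona equality case'' forcing the top level into a $(2c+1)$-vertex support has no basis: $\F_M$ need not be anywhere near the extremal configuration, and even if it were, the claimed Sperner-type bound and the extraction of a single extra $c$-set would not give the jump from $\binom{2c+1}{c}$ to $\binom{2c+1}{c}+1$ that you need, let alone the full quantity $|\partial^c(\F)|$. As written, this case is not a proof.

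The paper's proof sidesteps the overlap problem entirely by inducting on the maximum edge size $j$ one level at a time: replace the top level $\E_j$ by its $(j-1)$-shadow to form $\E^*$, note that the clutter property makes $\partial^{j-1}(\E_j)$ disjoint from $\E_{j-1}$ so that $\E^*$ is a clutter with $|\E^*|=|\E|-|\E_j|+|\partial^{j-1}(\E_j)|$ and $\partial^c(\E^*)=\partial^c(\E)$, and then split on whether $|\E_j|\leq\binom{2j-1}{j}$. In the first subcase Lov\'asz--KK gives $|\partial^{j-1}(\E_j)|\geq|\E_j|$, so $|\E^*|\geq|\E|$; in the second it gives $|\partial^{j-1}(\E_j)|>\binom{2j-1}{j-1}\geq\binom{2c+1}{c}$, so $|\E^*|\geq\binom{2c+1}{c}+1$. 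Either way $|\E^*|\geq\min\bigl(|\E|,\binom{2c+1}{c}+1\bigr)$, and the inductive hypothesis applied to $\E^*$ finishes. Because the reduction is always just one level, cross-level shadow interactions never arise. If you want to salvage your approach, you should abandon the Hall detour --- it adds complexity without resolving the overlap issue --- and instead perform this one-level-at-a-time shadow replacement directly.
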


\begin{proof}
If each edge in $\E$ has cardinality $c$, then $\Delta^{c}(\E)=\E$ and the result holds trivially. So we may suppose inductively that the maximum cardinality of an edge in $\E$ is $j \geq c+1$ and that the result holds if the maximum cardinality of an edge in $\E$ is $j-1$.

Let $\E_i=\{E \in \E:|E|=i\}$ for each $i \in \{c,\ldots,j\}$, and let $H^*$ be a hypergraph with vertex set $V(H)$ and edge set $\E^*=(\E \setminus \E_j) \cup \Delta^{j-1}(\E_j)$. Because $H$ is a clutter, $H^*$ is a clutter and $\Delta^{j-1}(\E_j)$ is disjoint from $\E_{j-1}$.
\begin{itemize}
    \item
If $|\E_j| \leq \binom{2j-1}{j}$, then $|\E_j|=\binom{y}{j}$ for some real $y \leq 2j-1$ and hence $|\Delta^{j-1}(\E_j)| \geq \binom{y}{j-1} \geq \binom{y}{j} = |\E_j|$ using Theorem~\ref{t:LovaszKK}. Thus $|\E^*| \geq |\E|$.
    \item
If $|\E_j| > \binom{2j-1}{j}$, then $|\Delta^{j-1}(\E_j)| > \binom{2j-1}{j-1} \geq \binom{2c+1}{c}$ by Theorem~\ref{t:LovaszKK} and so $|\E^*| \geq \binom{2c+1}{c}+1$.
\end{itemize}
So in either case $|\E^*| \geq \min(|\E|,\binom{2c+1}{c}+1)$. The result now follows by applying our inductive hypothesis to $\E^*$ and noting that $\Delta^{c}(\E^*)=\Delta^{c}(\E)$.
\end{proof}

The bulk of the work of proving Theorem~\ref{t:upperBound} is accomplished in the following lemma. It establishes that \eqref{e:upperBound} holds subject to the existence of a clutter with desirable properties. It then only remains to show that, given an $(n,k)$-Sperner partition system with $p$ partitions, a clutter satisfying the hypotheses of Lemma~\ref{l:localisedClutter} can be obtained by considering the partition classes containing a particular element. (In fact, we must also do some tedious checking to ensure that \eqref{e:upperBound} holds for ``small'' values of $p$ not covered by Lemma~\ref{l:localisedClutter}.) In the proof of Theorem~\ref{t:upperBound}, this special element is chosen as one that, according to a certain metric, tends overall to appear in smaller partition classes.

\begin{lemma}\label{l:localisedClutter}
Let $n$ and $k$ be integers such that $n \geq 2k+2$ and $k \geq 3$, and let $p$ be an integer such that
\[p \geq \max\left(\tfrac{2n}{c(2k-r)}\left(\tbinom{n-1}{c-1} - \tbinom{2c+1}{c-1}\right),\tbinom{n-1}{c-1}+1\right).\]
If there is a clutter with $n-1$ vertices and edge set $\E$ such that $|\E| \geq p$ and $\sum_{E \in \E}\frac{c-|E|}{|E|+1} \geq \frac{p(k-r)}{n}$, then
\[\big\lceil \big(1-\tfrac{r(c+1)}{n}\big)p \big\rceil+\L_c\left(\big\lfloor \tfrac{r(c+1)}{n}p \big\rfloor\right) \leq \mbinom{n-1}{c-1}.\]
\end{lemma}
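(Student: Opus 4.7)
My plan is to associate two disjoint families of $(c-1)$-subsets of $[n-1]$ to the clutter $\E$, so that the sum of their sizes is at most $\binom{n-1}{c-1}$, and then bound each of them from below using the two hypotheses on $\E$. The shadow bounds I would use are \tref{t:LovaszKK} (Lov\'asz's approximation of the Kruskal--Katona theorem) and \lref{l:shadowSize}.

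Specifically, partition the edge set as $\E=\E_S\cup\E_L$, where $\E_S=\{E\in\E:|E|\le c-1\}$ and $\E_L=\{E\in\E:|E|\ge c\}$. Let $\mathcal{T}_S$ be the family of all $T\in\binom{[n-1]}{c-1}$ such that $E\subseteq T$ for some $E\in\E_S$, and let $\mathcal{T}_L=\partial^{c-1}(\E_L)$. The clutter property forces $\mathcal{T}_S\cap\mathcal{T}_L=\emptyset$: a common $T$ would sandwich some $E\in\E_S$ inside some $F\in\E_L$, contradicting the antichain condition. Hence $|\mathcal{T}_S|+|\mathcal{T}_L|\le\binom{n-1}{c-1}$, and it suffices to establish
\[|\mathcal{T}_S|\ge\lceil(1-\tfrac{r(c+1)}{n})p\rceil\qquad\text{and}\qquad|\mathcal{T}_L|\ge\L_c(\lfloor\tfrac{r(c+1)}{n}p\rfloor).\]

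For $|\mathcal{T}_L|$, \lref{l:shadowSize} gives $|\partial^c(\E_L)|\ge\min(|\E_L|,\binom{2c+1}{c}+1)$, which combined with \tref{t:LovaszKK} yields $|\mathcal{T}_L|\ge\L_c(\min(|\E_L|,\binom{2c+1}{c}+1))$. For $|\mathcal{T}_S|$, I would use the weighted hypothesis $\sum_{E\in\E}\frac{c-|E|}{|E|+1}\ge\frac{p(k-r)}{n}$: since each term for $E\in\E_L$ is nonpositive, essentially all of this weight is carried by $\E_S$. This weighted mass should translate, through a counting of pairs $(E,T)$ with $E\in\E_S$ and $T\in\mathcal{T}_S$ a $(c-1)$-superset of $E$ (noting each $E\in\E_S$ has $\binom{n-1-|E|}{c-1-|E|}$ such supersets), into the desired lower bound on $|\mathcal{T}_S|$.

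The hard part will be coordinating the two bounds so that their sum always meets the target $\lceil(1-\frac{r(c+1)}{n})p\rceil+\L_c(\lfloor\frac{r(c+1)}{n}p\rfloor)$. When $|\E_L|$ is very large the weak $\binom{2c+1}{c}+1$ branch of \lref{l:shadowSize} becomes the bottleneck for $|\mathcal{T}_L|$, but in that same regime $|\E_S|$ must be relatively small, so the weighted constraint forces the weight carried by the surviving small edges to be concentrated, producing compensating excess in $|\mathcal{T}_S|$. Conversely, if $|\E_L|$ is modest then $|\mathcal{T}_L|\ge\L_c(|\E_L|)$ is strong and $|\mathcal{T}_S|$ only needs the direct bound. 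The two lower bounds $p\ge\frac{2n}{c(2k-r)}\bigl(\binom{n-1}{c-1}-\binom{2c+1}{c-1}\bigr)$ and $p\ge\binom{n-1}{c-1}+1$ in the hypothesis are calibrated precisely to make this balance work in every regime. Handling the floors and ceilings in the final inequality is a routine integer adjustment once the real-valued inequality is in hand.
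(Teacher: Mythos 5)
Your opening move is sound and essentially reproduces the paper's key inequality: in the paper's Case~1 (all edges of size $\geq c-1$) the family $\mathcal{T}_S$ is exactly $\E_{c-1}$, $\mathcal{T}_L = \partial^{c-1}(\E_c\cup\E_{>c})$, and the disjointness-of-shadows observation is precisely its equation~\eqref{e:KK}. However, the plan you sketch for bounding $|\mathcal{T}_S|$ and $|\mathcal{T}_L|$ has two genuine gaps.

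First, the double-counting of pairs $(E,T)$ does not yield a lower bound on $|\mathcal{T}_S|$. You can compute $\sum_{E\in\E_S}\binom{n-1-|E|}{c-1-|E|}$, but to descend from this to $|\mathcal{T}_S|$ you would need an upper bound on the number of $E\in\E_S$ that a single $T\in\binom{X'}{c-1}$ can contain, and the clutter condition only says these form an antichain inside $T$ --- there can be as many as $\binom{c-1}{\lfloor(c-1)/2\rfloor}$ of them, which is far too many. The paper circumvents this completely by an inductive compression step (its Case~2): whenever $\E$ has an edge of size $j\leq c-2$, it replaces $\E_j$ by $\partial^{j+1}(\E_j)$, checks that both $|\E|$ and the weighted quantity $\sum_{E}\frac{c-|E|}{|E|+1}$ do not decrease, and thereby reduces to the case $\E_S=\E_{c-1}$, where $\mathcal{T}_S=\E_{c-1}$ and the lower bound $|\E_{c-1}|\geq a_0:=\lceil pc(k-r)/n\rceil$ follows immediately from the weight hypothesis. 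Without some such compression I do not see how to control $|\mathcal{T}_S|$.

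Second, the proposed split into two independent lower bounds is false even after compression: the bound $|\mathcal{T}_L|\geq\L_c(\lfloor r(c+1)p/n\rfloor)$ need not hold, because $|\E_L|$ can be smaller than $p-a_0$ whenever $|\E_{c-1}|$ exceeds $a_0$. You partially anticipate this when you say the ``hard part'' is coordinating the two bounds, but the coordination is not a small adjustment --- it is the heart of the argument. The paper's resolution is to treat $a=|\E_{c-1}|$ as a free real parameter in $[a_0,|\E|]$, bound the whole sum by $f(a)=a+\L_c(\max(p-a,1))$ (Case~1a) or by a sibling function $g$ (Case~1b, when $|\E_{>c}|>\binom{2c+1}{c}$, which is where your ``large $|\E_L|$'' regime is actually handled), and use concavity of $\L_c$ together with the numerical hypotheses on $p$ to show the minimum of $f$ (resp.\ $g$) on this interval is attained at $a=a_0$. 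This minimisation step is what makes the claimed bound drop out; it cannot be replaced by proving the two shadow bounds separately.

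So: right geometric set-up, but the two load-bearing steps --- the compression of small edges, and the concavity/minimisation argument that replaces your ``balance both bounds'' --- are missing, and the double-counting sketch for $|\mathcal{T}_S|$ as stated does not work.
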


\begin{proof}
Let $H$ be a clutter satisfying the hypotheses of the lemma and let $X'=V(H)$. For each $i \in \{0,\ldots,n-1\}$, let $\E_i=\{E \in \E: |E|=i\}$ and let $\E_{>c}=\E_{c+1} \cup \cdots \cup \E_{n-1}$. We abbreviate $\lceil\tfrac{pc(k-r)}{n} \rceil$ to $a_0$. Note that $a_0=\lceil (1-\tfrac{r(c+1)}{n})p\rceil$ using $n=ck+r$. We consider two cases according to minimum cardinality of an edge in $\E$.

\noindent{\bf Case 1.} Suppose that $|E| \geq c-1$ for each $E \in \E$. Then the only edges in $\E$ that make a positive contribution toward $\sum_{E \in \E}\frac{c-|E|}{|E|+1}$ are those in $\E_{c-1}$ and so by our hypotheses we must have $\frac{1}{c}|\E_{c-1}| \geq \frac{p(k-r)}{n}$ and hence $|\E_{c-1}| \geq a_0$. Also, because $p \geq \binom{n-1}{c-1}+1$, we have $\E \nsubseteq \binom{X'}{c-1}$ and hence $|\E_c|+|\E_{>c}| \geq 1$.  Let $H^*$ be the hypergraph with vertex set $X'$ and edge set $\E^*=(\E \setminus \E_{>c}) \cup \Delta^c(\E_{>c})$. Then $\E^*=\E_{c-1} \cup \E_c \cup \Delta^c(\E_{>c})$ and, because $H$ is a clutter, $H^*$ is a clutter and $\Delta^c(\E_{>c})$ is disjoint from $\E_c$.

There are $\binom{n-1}{c-1}$ sets in $\binom{X'}{c-1}$, and because $H^*$ is a clutter each of these can be in at most one of $\E_{c-1}$ and $\Delta^{c-1}(\E_c \cup \Delta^c(\E_{>c}))$. Thus, by Theorem~\ref{t:LovaszKK},
\begin{equation}\label{e:KK}
|\E_{c-1}| + \L_{c}\big(|\E_c|+|\Delta^c(\E_{>c})|\big) \leq \mbinom{n-1}{c-1}.
\end{equation}
We consider two subcases according to the value of $|\E_{>c}|$.

\noindent{\bf Case 1a.} Suppose that $|\E_{>c}| \leq \binom{2c+1}{c}$. Then $|\Delta^c(\E_{>c})| \geq |\E_{>c}| = |\E|-|\E_c|-|\E_{c-1}|$ by Lemma~\ref{l:shadowSize}. So $|\E_c|+|\Delta^c(\E_{>c})| \geq \max(p-|\E_{c-1}|,1)$ because $|\E| \geq p$ and $|\E_c|+|\E_{>c}| \geq 1$. Thus, using the fact that $\L_{c}$ is monotonically increasing, \eqref{e:KK} implies that
\[
f(|\E_{c-1}|) \leq \mbinom{n-1}{c-1} \mbox{ where } f(a) = a+\L_c\left(\max\left(p-a,1\right)\right).
\]

Consider $f$ as a function on the real domain $a_0 \leq a \leq  |\E|$, noting that we have seen $a_0 \leq |\E_{c-1}| \leq |\E|$. Because $f(|\E_{c-1}|) \leq \binom{n-1}{c-1}$, certainly the global minimum of $f$ is at most $\binom{n-1}{c-1}$. Now, $f$ is monotonically increasing for $p-1 < a \leq |\E|$ and, because $\L_c$ is concave, $f$ is concave for $a_0 \leq a \leq  p-1$. Thus, $f$ achieves its global minimum either at $a=a_0$ or at $a=p-1$. However, $f(p-1)=p-1+c$ and $p-1 \geq \binom{n-1}{c-1}$ by our hypotheses. Thus $f$ achieves its global minimum at $a=a_0$ and we have $f(a_0) \leq \binom{n-1}{c-1}$. Now the result follows because
\[f(a_0)=a_0+\L_c(p-a_0) = \big\lceil \big(1-\tfrac{r(c+1)}{n}\big)p \big\rceil+\L_c\left(\big\lfloor \tfrac{r(c+1)}{n}p \big\rfloor\right).\]

\smallskip\noindent{\bf Case 1b.} Suppose that $|\E_{>c}| > \binom{2c+1}{c}$. Because $\sum_{E \in \E}\frac{c-|E|}{|E|+1} \geq \frac{p(k-r)}{n}$,
\[\tfrac{p(k-r)}{n} \leq \tfrac{1}{c}|\E_{c-1}|-\medop\sum_{i = c+1}^{n-1}\tfrac{i-c}{i+1}|\E_i| \leq \tfrac{1}{c}|\E_{c-1}|-\tfrac{1}{c+2}|\E_{>c}|,\]
where the last inequality follows because $\frac{i-c}{i+1} \geq \tfrac{1}{c+2}$ for each $i \in \{c+1,\ldots,n-1\}$. Thus $|\E_{>c}| \leq \frac{c+2}{c}|\E_{c-1}|-\tfrac{p(c+2)(k-r)}{n}$. Also, $|\Delta^c(\E_{>c})| > \binom{2c+1}{c}$ by the hypothesis of this subcase and Lemma~\ref{l:shadowSize}. Combining these facts and $|\E| \geq p$, we have
\[|\E_c|+|\Delta^c(\E_{>c})| = |\E|-|\E_{c-1}|-|\E_{>c}|+|\Delta^c(\E_{>c})| > \max\left(\tfrac{p(c+1)(2k-r)}{n}-\tfrac{2c+2}{c}|\E_{c-1}|,0\right)+\tbinom{2c+1}{c}.\]
Thus, \eqref{e:KK} implies that
\[
g(|\E_{c-1}|) < \mbinom{n-1}{c-1} \mbox{ where } g(a) = a+\L_c\left(\max\left( \tfrac{p(c+1)(2k-r)}{n}-\tfrac{2c+2}{c}a,0\right)+\tbinom{2c+1}{c}\right).
\]

Consider $g$ as function on the real domain $a_0 \leq a \leq  |\E|$ and note that the global minimum of $g$ is less than $\binom{n-1}{c-1}$. Now, $g$ is monotonically increasing for $a_1 < a \leq |\E|$ and concave for $a_0 \leq a \leq  a_1$, where $a_1=\frac{pc(2k-r)}{2n}$. Thus, it achieves its global minimum either at $a=a_0$ or at $a=a_1$. However, $g(a_1)=a_1+\binom{2c+1}{c-1} \geq \binom{n-1}{c-1}$ using the hypothesis that $p \geq \frac{2n}{c(2k-r)}(\binom{n-1}{c-1} - \binom{2c+1}{c-1})$. Thus we have $g(a_0) < \binom{n-1}{c-1}$. Now, setting $\delta=a_0-\frac{pc(k-r)}{n}$ and noting that $0 \leq \delta <1$,
\[g(a_0)=a_0+\L_c\left(p-a_0+\tbinom{2c+1}{c}-\tfrac{c+2}{c}\delta\right) \geq a_0+\L_c(p-a_0).\]
As in Case 1a, the result follows.

\smallskip\noindent{\bf Case 2.} Suppose that $|E| \leq c-2$ for some $E \in \E$. Using Case 1 as a base case, we may suppose inductively that the minimum cardinality of an edge in $\E$ is $j \leq c-2$ and that the lemma holds when the minimum cardinality of an edge in $\E$ is $j+1$. For any family $\S$ of subsets of $X$, define $d'(\S)=\sum_{S \in \S}\frac{c-|S|}{|S|+1}$. Note we have assumed that $d'(\E) \geq \frac{p(k-r)}{n}$.

Let $H^*$ be the hypergraph with vertex set $X'$ and edge set $\E^*=(\E \setminus \E_{j}) \cup \nabla^{j+1}(\E_j)$. Because $H$ is a clutter, $H^*$ is a clutter and $\nabla^{j+1}(\E_j)$ is disjoint from $\E_{j+1}$. Thus it suffices to show that $d'(\nabla^{j+1}(\E_j)) \geq d'(\E_{j})$ and $|\nabla^{j+1}(\E_j)| \geq |\E_{j}|$ because then we will be able to apply our inductive hypothesis to $H^*$ to obtain the required result.

Each edge in $\E_{j}$ is a subset of $n-j-1$ edges in $\nabla^{j+1}(\E_j)$, and each edge in $\nabla^{j+1}(\E_j)$ is a superset of at most $j+1$ edges in $\E_{j}$. Thus $|\nabla^{j+1}(\E_j)| \geq \frac{n-j-1}{j+1} |\E_{j}|$ and
\[d'(\nabla^{j+1}(\E_j))=\tfrac{c-j-1}{j+2}|\nabla^{j+1}(\E_j)| \geq \tfrac{(c-j-1)(n-j-1)}{(j+1)(j+2)}|\E_{j}|=\tfrac{(c-j-1)(n-j-1)}{(c-j)(j+2)}d'(\E_{j}),\]
where the second equality follows because $d'(\mathcal{E}_{j})=\frac{c-j}{j+1}|\mathcal{E}_{j}|$. Thus $d'(\nabla^{j+1}(\E_j)) \geq d'(\E_{j})$ and $|\E^*| \geq |\E|$ as required because, using $j \in \{0,\ldots,c-2\}$ and $k \geq 3$, we have $c-j-1 \geq \frac{1}{2}(c-j)$ and $n-j-1 \geq 2(j+2)$.
\end{proof}

\begin{proof}[\textup{\textbf{Proof of Theorem~\ref{t:upperBound}}}]
Let $p_0=\SP(n,k)$, let $X$ be a set with $|X|=n$, and let $\P$ be an $(n,k)$-Sperner partition system on ground set $X$ with $p_0$ partitions. We may assume $r \neq 0$ because, when $r=0$, $\SP(n,k)=\binom{n-1}{c-1}$ and \eqref{e:upperBound} clearly holds with $p=\binom{n-1}{c-1}$. So, in addition to $n \geq 2k+2$, we have $n \geq 4c+1$. Let $p_1 = \max(\tfrac{2n}{c(2k-r)}(\tbinom{n-1}{c-1} - \tbinom{2c+1}{c-1}),\tbinom{n-1}{c-1}+1)$.

\smallskip\noindent{\bf Case 1.} Suppose that $p_0 \geq p_1$. We will find a clutter satisfying the conditions of Lemma~\ref{l:localisedClutter} and so complete the proof. For each $x \in X$, let $\P(x)$ be the set of all partition classes of $\P$ that contain $x$. For a subset $S$ of $X$ we define $d(S)=c+1-|S|$, and for a family $\S$ of subsets of $X$ we define $d(\S)=\sum_{S \in \S}d(S)$. Note that, for each partition $\pi$ in $\P$, we have $d(\pi)=k-r$ because $\pi$ has exactly $k$ classes and the sum of the cardinalities of the classes is equal to $n=ck+r$. For a vertex $x \in X$, we further define $d(x)=\sum_{S \in \P(x)} \frac{d(S)}{|S|}$. Thus we have that $\sum_{x \in X}d(x)=\sum_{\pi \in \P}d(\pi)=p_0(k-r)$. Let $z$ be an element of $X$ such that $d(z) \geq d(x)$ for each $x \in X$ and observe that $d(z) \geq \frac{p_0(k-r)}{n}$. Let $H$ be the hypergraph with vertex set $X'=X \setminus \{z\}$ and edge set $\E=\{S \setminus \{z\}: S \in \P(z)\}$. Note that $H$ is a clutter and $|\E|=p_0$ because $\P$ is a Sperner partition system with $p_0$ partitions. Thus, because $d(z) \geq \frac{p_0(k-r)}{n}$ and $p_0 \geq p_1$, $H$ satisfies the conditions of Lemma~\ref{l:localisedClutter} and we can apply it to produce the required result.

\smallskip\noindent{\bf Case 2.} Suppose that $p_0 < p_1$. In this case we show directly that \eqref{e:upperBound} holds for some real number $p \geq p_1$ and hence that it holds for $p=p_0$ (recall that the left hand side of \eqref{e:upperBound} is nondecreasing in $p$).

\smallskip\noindent{\bf Case 2a.} Suppose that $c=2$. Then, when $r = 2$, we have $p_1=2k+2$ and \eqref{e:upperBound} holds because $\L_2(6) \leq 5$. When $r \geq 3$, $p_1=\frac{2k+r}{2k-r}(2k+r-6)$ and it can be seen that \eqref{e:upperBound} holds if and only if $\L_2(\lfloor\frac{3r(2k+r-6)}{2k-r}\rfloor) \leq \lfloor r+5+\frac{2r(r-3)}{2k-r} \rfloor$. This holds for each integer $r \geq 4$ because then $\frac{3r(2k+r-6)}{2k-r}<9r \leq \binom{r+5}{2}$. It also holds for $r=3$ because $\L_2(9)\leq 8$.

\smallskip\noindent{\bf Case 2b.} Suppose that $c \geq 3$. Let $p_2= \tfrac{2n}{c(2k-r)}(\tbinom{n-1}{c-1}-1)$ and note that $p_2 \geq p_1$. Noting that $1-\tfrac{r(c+1)}{n}=\frac{c(k-r)}{n}$, it can be seen that \eqref{e:upperBound} will hold with $p=p_2$ provided that
\begin{equation}\label{e:upperBoundProofApprox}
\L_c\left(\tfrac{2r(c+1)}{c(2k-r)}\left(\tbinom{n-1}{c-1}-1\right) \right) \leq \left\lfloor\tfrac{r}{2k-r}\tbinom{n-1}{c-1}+\tfrac{2k-2r}{2k-r}\right\rfloor.
\end{equation}
Let $z=\tfrac{2r(c+1)}{c(2k-r)}(\tbinom{n-1}{c-1}-1)$ be the argument of $\L_c$ in \eqref{e:upperBoundProofApprox} and note that if $z \geq \binom{3c+1}{c}$, then it follows from the definition of $\L_c$ that $\L_c(z) \leq \frac{c}{2c+2}z$ and thus that \eqref{e:upperBoundProofApprox} holds. Because $r \geq 1$, we have $z \geq \frac{2c+2}{2n-c-2}(\binom{n-1}{c-1}-1)$. This latter expression is an increasing function of $n$ for $n \geq 4c+1$. Thus, for $c \geq 9$ we have $z \geq \binom{3c+1}{c}$ because $z \geq \frac{2c+2}{2n-c-2}(\binom{n-1}{c-1}-1) \geq \frac{2c+2}{7c}(\binom{4c}{c-1}-1)$ and
\[\tfrac{2c+2}{7c}\left(\tbinom{4c}{c-1}-1\right)\big/\tbinom{3c+1}{c} = \tfrac{2c+2}{21c+7}\tbinom{4c}{c-1}\big/\tbinom{3c}{c-1}-\tfrac{2c+2}{7c}\big/\tbinom{3c+1}{c} \geq  \tfrac{2c+2}{21c+7}(\tfrac{4}{3})^{c-1}-10^{-7} \geq 1.\]
Furthermore, by explicit calculation, we have $z \geq \frac{2c+2}{7c}(\binom{4c}{c-1}-1) \geq \binom{3c+1}{c}$ for $c=8$. We also have $z \geq \frac{2c+2}{2n-c-2}(\binom{n-1}{c-1}-1) \geq \binom{3c+1}{c}$ for $c\in \{4,5,6,7\}$ and $n \geq 31$ and for $c=3$ and $n \geq 61$. This leaves only a limited number of pairs $(n,k)$ to be checked. Using a computer, it is routine to compute $p_1$ for each pair and verify that \eqref{e:upperBound} holds for $p=p_1$.
\end{proof}

We conclude this section by showing that a slightly weaker version of the upper bound implied by Theorem~\ref{t:upperBound} can be written in a form that is very reminiscent of the expression for $\MMS(n,k)$, and that this implies that our upper bound is always at least as good as $\MMS(n,k)$.

\begin{corollary}\label{c:MMSFormUpperBound}
If $n$ and $k$ are integers such that $n \geq 2k+2$, $k \geq 4$ and $r \neq 0$,
\begin{equation}\label{e:MMSFormUpperBound}
\SP(n,k) \leq \frac{\binom{n}{c}}{(k-r)+\frac{r(c+1)}{q-c+1}}
\end{equation}
where $q$ is the real number such that $q \geq c$ and $\binom{q}{c}=\frac{r(c+1)}{n}\cdot\SP(n,k)$. Furthermore, the bound implied by Theorem~$\ref{t:upperBound}$ is less than $\MMS(n,k)$.
\end{corollary}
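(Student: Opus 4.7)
The plan is to recast \eqref{e:MMSFormUpperBound} as an equivalent ``continuous'' inequality, derive that from Theorem~\ref{t:upperBound} via the concavity of $\L_c$, and then obtain the comparison with $\MMS(n,k)$ by examining denominators.

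Write $\alpha=\tfrac{r(c+1)}{n}$. Using $\binom{q}{c-1}=\tfrac{c}{q-c+1}\binom{q}{c}$, $\binom{q}{c}=\alpha\,\SP(n,k)$, $\binom{n}{c}=\tfrac{n}{c}\binom{n-1}{c-1}$, and $1-\alpha=\tfrac{c(k-r)}{n}$, a routine rearrangement shows that \eqref{e:MMSFormUpperBound} is equivalent to
\[(1-\alpha)\,\SP(n,k)+\L_c(\alpha\,\SP(n,k))\le\binom{n-1}{c-1}.\]
Set $S=\SP(n,k)$, $B=\lfloor\alpha S\rfloor$, and $\epsilon=\alpha S-B\in[0,1)$. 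Since $S$ is an integer, $\lceil(1-\alpha)S\rceil=S-B$, so Theorem~\ref{t:upperBound} reads $S-B+\L_c(B)\le\binom{n-1}{c-1}$, while the displayed target becomes $S-B-\epsilon+\L_c(B+\epsilon)\le\binom{n-1}{c-1}$. So it suffices to prove $\L_c(B+\epsilon)-\L_c(B)\le\epsilon$, i.e., that the average slope of $\L_c$ on $[B,B+\epsilon]$ is at most $1$. By the concavity of $\L_c$ (see \cite{Buk}) together with $\L_c(0)=0$, this slope is bounded above by $\L_c'(B)\le\L_c(B)/B=\tfrac{c}{q_B-c+1}$, where $q_B$ is defined by $\binom{q_B}{c}=B$; this is at most $1$ precisely when $q_B\ge 2c-1$, i.e., $B\ge\binom{2c-1}{c}$. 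I expect the \emph{main obstacle} to be guaranteeing this lower bound on $B$: under $n\ge 2k+2$, $k\ge 4$, and $r\ne0$, the inequality $\SP(n,k)\ge\NLB(n,k)=\binom{n-r}{c}/k$ should force $\alpha\,\SP(n,k)\ge\binom{2c-1}{c}$ in all but a handful of sporadic small parameter triples, which can be verified by hand.

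For the final assertion, let $p'$ denote the largest integer satisfying \eqref{e:upperBound}; this is the bound implied by Theorem~\ref{t:upperBound}. If $\alpha p'\ge\binom{n-1}{c}$, then $\L_c(\lfloor\alpha p'\rfloor)\ge\L_c(\binom{n-1}{c})=\binom{n-1}{c-1}$ and $\lceil(1-\alpha)p'\rceil\ge1$ (since $0<\alpha<1$ and $p'\ge1$), contradicting \eqref{e:upperBound}. Hence $\alpha p'<\binom{n-1}{c}$, so the $q_{p'}$ defined by $\binom{q_{p'}}{c}=\alpha p'$ satisfies $q_{p'}-c+1<n-c$. Re-running the argument above with $p'$ in place of $\SP(n,k)$ (this only uses that $p'$ satisfies \eqref{e:upperBound}) yields $p'\le\binom{n}{c}/[(k-r)+r(c+1)/(q_{p'}-c+1)]$. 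Since $q_{p'}-c+1<n-c$ strictly enlarges the denominator compared with that of $\MMS(n,k)$, we conclude $p'<\MMS(n,k)$.
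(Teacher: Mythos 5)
Your route is genuinely different from the paper's, and the difference is worth highlighting. The paper obtains the ``continuous'' inequality
\[
\bigl(1-\tfrac{r(c+1)}{n}\bigr)\SP(n,k)+\L_c\bigl(\tfrac{r(c+1)}{n}\SP(n,k)\bigr)\le\binom{n-1}{c-1}
\]
not by post-processing Theorem~\ref{t:upperBound}, but by going back into the proof of Lemma~\ref{l:localisedClutter} and re-running the convexity argument for $f$ and $g$ over the unrounded domain $\frac{pc(k-r)}{n}\le a\le|\E|$ instead of $\lceil\frac{pc(k-r)}{n}\rceil\le a\le|\E|$; the floor and ceiling in \eqref{e:upperBound} then simply never appear. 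You instead take the rounded inequality of Theorem~\ref{t:upperBound} as a black box and try to ``unround'' it by showing $\L_c(B+\epsilon)-\L_c(B)\le\epsilon$ via concavity. That reduction is algebraically correct, and your bound $\L_c'(B)\le\L_c(B)/B=\tfrac{c}{q_B-c+1}$ is sound once one checks (as you can, from $\L_c(B)\le cB$) that the chord through $(0,0)$ dominates the derivative despite $\L_c$ only being declared concave on $[1,\infty)$. The cost of your route is exactly the extra hypothesis you identify: you need $\lfloor\tfrac{r(c+1)}{n}\SP(n,k)\rfloor\ge\binom{2c-1}{c}$ so that $q_B\ge 2c-1$. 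This is a real additional condition that the paper's argument never needs, and at the moment your proof leaves it as an unverified claim (plausible from $\SP(n,k)\ge\NLB(n,k)$, but not trivially so for small $c$ and $k$ near $4$; a short but genuine computation is required). Until that lower bound on $B$ is actually established for all $n\ge 2k+2$, $k\ge 4$, $r\ne 0$, the first assertion is not fully proved.

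For the second assertion your argument is, if anything, tidier than the paper's: by working directly with the implicit bound $p'$ rather than $\SP(n,k)$, you show $\alpha p'<\binom{n-1}{c}$ from \eqref{e:upperBound} itself, then rerun the denominator comparison. The paper instead deduces $q<n-1$ from $\SP(n,k)\le\MMS(n,k)$ and a short estimate giving $\alpha\SP(n,k)\le\tfrac12\binom{n-1}{c}$, and then infers the statement about $p'$. Both are fine, but note that your second-part argument still inherits the same gap as the first part, since it reuses the concavity step to pass to the continuous form with $p'$ in place of $\SP(n,k)$ (the needed inequality $\lfloor\alpha p'\rfloor\ge\binom{2c-1}{c}$ does at least follow from the corresponding bound for $\SP(n,k)$ because $p'\ge\SP(n,k)$).
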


\begin{proof}
Observe that $\SP(n,k)\geq \NLB(n,k) = \frac{1}{k}\binom{ck}{c}$ implies $\tfrac{r(c+1)}{n}\cdot\SP(n,k) > \tfrac{r}{k^2}\binom{ck}{c}$. Further, it is routine to verify $\tfrac{r}{k^2}\binom{ck}{c}\geq\tfrac{r}{16}\binom{4c}{c}>\binom{2c-1}{c}$ since $r \geq 1$ when $c \geq 3$ and $r \geq 2$ when $c = 2$. Thus we have $\tfrac{r(c+1)}{n}\cdot\SP(n,k) > \binom{2c-1}{c}$.
It follows that $q$ is well defined. Further, because $\L_c(1)=c$, $\L_c(\binom{2c-1}{c})=\binom{2c-1}{c}$ and $\L_c$ is concave, the derivative of $\L_c(x)$ is less than $1$ for all $x \geq \binom{2c-1}{c}$ and hence, for any real $\epsilon > 0$,
\[\L_c\left(\big\lfloor\tfrac{r(c+1)}{n}\cdot\SP(n,k)\big\rfloor+\epsilon\right) < \L_c\left(\big\lfloor\tfrac{r(c+1)}{n}\cdot\SP(n,k)\big\rfloor\right)+\epsilon.\]
Thus we can deduce from Theorem~$\ref{t:upperBound}$ the slightly weaker conclusion that
\begin{equation}\label{e:weakForm}
\left(1-\tfrac{r(c+1)}{n}\right) \cdot \SP(n,k) +\L_c\left( \tfrac{r(c+1)}{n} \cdot \SP(n,k) \right) \leq \mbinom{n-1}{c-1}.
\end{equation}
By applying $\L_c(x) = \frac{c}{q-c+1}x$ in \eqref{e:weakForm} and solving for $\SP(n,k)$ we obtain \eqref{e:MMSFormUpperBound}.

Now, using $\SP(n,k) \leq \MMS(n,k)$, we have
\[\tfrac{r(c+1)}{n}\cdot\SP(n,k) \leq \frac{\binom{n-1}{c} }{1+\frac{(n-c)(k-r)}{r(c+1)}} \leq \mfrac{1}{2}\mbinom{n-1}{c}.\]
Thus, $q<n-1$ and so the bound implied by this corollary, and hence the bound implied by Theorem~\ref{t:upperBound}, is less than $\MMS(n,k)$.
\end{proof}

\section{The case \texorpdfstring{$\bm{n = 3k-6}$}{n=3k-6}}\label{s:infFamily}

In this section we exhibit a new infinite family of parameter sets $(n,k)$ for which we can precisely determine $\SP(n,k)$. For this family, the value of $\SP(n,k)$ matches the upper bound given by Theorem~\ref{t:upperBound}, and hence it supplies examples both of the theorem's usefulness and of situations in which its bound is tight.

\begin{lemma}\label{l:family}
Let $k \geq 11$ be an integer such that $k \not\equiv 4 \mod{6}$ and let $n=3k-6$. Then $\SP(n,k)=\lfloor\frac{1}{2}(k-2)^2\rfloor$.
\end{lemma}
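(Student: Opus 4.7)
Set $c = 2$ and $r = k-6$ in Theorem~\ref{t:upperBound}; the hypotheses $n \ge 2k+2$ and $k \ge 4$ are satisfied since $k \ge 11$. Inequality~\eqref{e:upperBound} becomes
\[
\lceil 4p/(k-2) \rceil + \L_2\bigl(\lfloor(k-6)p/(k-2)\rfloor\bigr) \le 3k-7.
\]
I would verify by direct calculation, splitting on the parity of $k$ to resolve the ceiling and floor, that the largest integer $p$ satisfying this is $\lfloor(k-2)^2/2\rfloor$. For $k$ even with $p=(k-2)^2/2$, the first term is $2(k-2)$ and the argument of $\L_2$ is $(k-6)(k-2)/2$; evaluating $\L_2$ via $q(q-1)/2=(k-6)(k-2)/2$ and comparing to $3k-7$ both verifies the bound at $p$ and shows that $p+1$ breaks it. The odd case is essentially identical, with small half-integer adjustments that do not affect the conclusion.

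\textbf{Lower bound -- construction strategy.} I would construct an almost uniform $(3k-6,k)$-Sperner partition system with $\lfloor(k-2)^2/2\rfloor$ partitions on a tripartite ground set $V = V_1 \sqcup V_2 \sqcup V_3$ with $|V_1|=|V_2|=|V_3|=k-2$. Every pair-class will be a cross-part edge and every triple-class will lie entirely within a single part. Since a cross-part pair cannot be contained in a within-part triple, the Sperner condition holds automatically. Counting gives $6\cdot\lfloor(k-2)^2/2\rfloor = 3(k-2)^2 = |E(K_{k-2,k-2,k-2})|$ when $k$ is even and three less when $k$ is odd, so the pair-classes should (essentially) decompose $K_{k-2,k-2,k-2}$ into $6$-matchings, one per partition, while in each partition the $k-6$ within-part triples cover exactly the $3(k-6)$ vertices missed by the matching. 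Using the identity $\lfloor(k-2)^2/2\rfloor = \binom{k-2}{2} + \lfloor(k-2)/2\rfloor$, I would naturally index the partitions by the $\binom{k-2}{2}$ unordered pairs $\{a,b\}\subset\{1,\ldots,k-2\}$ with $a\ne b$ (``off-diagonal'' partitions) together with $\lfloor(k-2)/2\rfloor$ ``diagonal'' partitions, and specify the triples and matchings from these indices.

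\textbf{Main obstacle.} The delicate point is to choose, for each partition, the triple counts $(t_1,t_2,t_3)$ in the three parts -- with $t_1+t_2+t_3=k-6$, each $t_i\le(k-2)/3$, and each residual $k-2-3t_i\le 6$ so that a cross-part $6$-matching on the pair-vertices exists -- and then to produce the actual triples and matchings so that across all partitions the cross-part edges (essentially) exhaust $K_{k-2,k-2,k-2}$ and all within-part triples are distinct. The hypothesis $k\not\equiv 4\pmod 6$ is what guarantees the design-theoretic building blocks needed here -- Kirkman triple systems, nearly Kirkman triple systems, or Kirkman frames on $k-2$ points, depending on $k\bmod 6$ -- exist; in particular it excludes the residue class containing $k=10$, where the classical non-existence of an NKTS of order $24$ is the obstruction, and the analogous obstructions at $k=16,22,\ldots$. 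I would therefore split the construction into cases according to $k\bmod 6$, apply the appropriate auxiliary design in each case, and verify the vertex-balance, matching-existence, and distinctness conditions individually. The case analysis and the verification of alignment between the cross-part matchings and the within-part triple packings is where I expect nearly all of the work to lie.
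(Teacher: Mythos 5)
Your upper bound matches the paper's: applying Theorem~\ref{t:upperBound} with $c=2$, $r=k-6$, $n=3k-6$, and $p=\lfloor\tfrac{1}{2}(k-2)^2\rfloor+1$, \eqref{e:upperBound} reduces to $2k-3+\L_2(\lfloor\tfrac{1}{2}(k^2-8k+12)\rfloor)\leq 3k-7$, which fails for $k\geq 11$ because $\binom{k-4}{2}<\lfloor\tfrac{1}{2}(k^2-8k+12)\rfloor$ forces $\L_2(\lfloor\tfrac{1}{2}(k^2-8k+12)\rfloor)>k-4$. That part is fine.

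The lower bound is where the gap is. You identify exactly the right clutter (tripartite ground set, cross-part pairs, within-part triples), but you then propose to hand-build a resolvable structure---a decomposition of $K_{k-2,k-2,k-2}$ into $6$-matchings aligned with distinct within-part triple parallel classes, via Kirkman triple systems, nearly Kirkman triple systems, and Kirkman frames---and you concede this unexecuted case analysis is ``where nearly all of the work'' lies. That is indeed a large unfilled hole, and it is also unnecessary. The paper instead invokes Lemma~\ref{l:colouringToSystem} (Bryant's almost-regular edge colouring theorem, Theorem~\ref{t:almostReg}): since every permutation of each $X_i$ is an automorphism of $H$, one only needs a preliminary edge colouring $\gamma_0$ in which each non-black colour class has $6$ edges of $\mathcal{A}$ and $k-6$ edges of $\mathcal{B}$ with the per-part degree \emph{sums} $\sum_{x\in X_i}\deg^{\gamma_0}_c(x)$ all equal to $k-2$. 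The specific edges can be chosen arbitrarily subject to counts $(a_1,a_2,a_3)$ from $(\mathcal{A}_1,\mathcal{A}_2,\mathcal{A}_3)$ summing to $6$ and $t_i=(k-2-a_j-a_\ell)/3$ from $\mathcal{B}_i$; Bryant's theorem then re-aligns the colour classes into genuine partitions, giving for free all the ``vertex-balance, matching-existence, and distinctness'' you were planning to verify by hand. So the route through resolvable designs is a genuinely different and substantially harder approach that you have not completed, while the intended route needs nothing beyond a counting check.

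Your explanation of the hypothesis $k\not\equiv 4\mod{6}$ is also wrong: it has nothing to do with nearly Kirkman triple systems (and in any case NKTS$(24)$ does exist). The actual obstruction is arithmetic. When $k\equiv 1\pmod 3$, the only nonnegative $(a_1,a_2,a_3)$ with $a_1+a_2+a_3=6$ and each $t_i$ an integer is a permutation of $(4,1,1)$, so every colour class draws $4$ edges from exactly one $\mathcal{A}_j$. If $n_j$ colour classes put their $4$ in $\mathcal{A}_j$, then $\mathcal{A}_j$ supplies $3n_j+p$ edges, which must be at most $|\mathcal{A}_j|=(k-2)^2$; when $k$ is even this equals $2p$, forcing $n_j\leq p/3$ for each $j$ while $n_1+n_2+n_3=p$, i.e.\ $3\mid p$. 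For $k\equiv 4\mod{6}$ one has $p=(k-2)^2/2\equiv 2\pmod 3$, so no valid split of the colour set exists---this, and not any design non-existence, is why that residue class is excluded.
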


\begin{proof}
First, suppose for a contradiction that $\SP(n,k) \geq \lfloor\frac{1}{2}(k-2)^2\rfloor+1$. Note that $\lfloor\frac{1}{2}(k-2)^2\rfloor+1=\frac{1}{2}(k-2)^2+\delta$ where $\delta=\frac{1}{2}$ if $k$ is odd and $\delta=1$ if $k$ is even. Then Theorem~\ref{t:upperBound} implies that \eqref{e:upperBound} holds with $n=3k-6$ and $p=\frac{1}{2}(k-2)^2+\delta$ and hence, via routine calculation,
\[2k-3+\L_2(\lfloor\tfrac{1}{2}(k^2-8k+12)\rfloor) \leq 3k-7.\]
However, because $\binom{k-4}{2}=\frac{1}{2}(k^2-9k+20)<\lfloor\tfrac{1}{2}(k^2-8k+12)\rfloor$ for $k \geq 11$, we have that $\L_2(\lfloor\tfrac{1}{2}(k^2-8k+12)\rfloor)>k-4$ and hence a contradiction.

Now we construct an $(n,k)$-Sperner partition system with $\lfloor\frac{1}{2}(k-2)^2\rfloor$ partitions and so complete the proof. Let $p=\lfloor\frac{1}{2}(k-2)^2\rfloor$, let $X_1$, $X_2$ and $X_3$ be disjoint sets such that $|X_1|=|X_2|=|X_3|=k-2$, and let $X=X_1 \cup X_2 \cup X_3$. For each $i \in \{1,2,3\}$, let
\begin{align*}
\mathcal{A}_i&=\{A \subseteq X: \mbox{$|A|=2$ and $|A \cap X_j|=1$ for each $j \in \{1,2,3\}\setminus\{i\}$}\} \\
\mathcal{B}_i&=\{B \subseteq X: \mbox{$|B|=3$ and $B \subseteq X_i$}\}.
\end{align*}
Let $\mathcal{A}=\mathcal{A}_1 \cup \mathcal{A}_2 \cup \mathcal{A}_3$ and $\mathcal{B}=\mathcal{B}_1 \cup \mathcal{B}_2 \cup \mathcal{B}_3$, and let $H$ be the hypergraph with vertex set $X$ and edge set $\mathcal{A} \cup \mathcal{B}$. Note that no set in $\mathcal{A}$ is a subset of a set in $\mathcal{B}$ and thus $H$ is a clutter. Observe that, for each $i \in \{1,2,3\}$, any permutation of $X_i$ is an automorphism of $H$. Let $C$ be a set of $p$ colours other than black.  By Lemma~\ref{l:colouringToSystem}, it suffices to find an edge colouring $\gamma$ of $H$ with colour set $C \cup \{\hbox{black}\}$ such that, for each $c \in C$, colour $c$ is assigned to $6$ edges in $\mathcal{A}$ and $k-6$ edges in $\B$ and $\sum_{x \in X_i}\deg^\gamma_c(x)=k-2$ for each $i \in \{1,2,3\}$.

We now describe how to find an edge colouring that satisfies the conditions we have specified. If $k \equiv 1 \mbox{ or } 2 \mod{6}$, then $p \equiv 0 \mod{3}$ and we let $\{C_1,C_2,C_3\}$ be a partition of $C$ such that $|C_1|=|C_2|=|C_3|=\frac{p}{3}$. If $k \equiv 5 \mod{6}$, then $p \equiv 1 \mod{3}$ and we let $\{C_1,C_2,C_3\}$ be a partition of $C$ such that $|C_1|=\frac{p+2}{3}$ and  $|C_2|=|C_3|=\frac{p-1}{3}$.  We describe how to choose the edges from $\A$ in each non-black colour class of $\gamma$; the remaining edges in each non-black class can be chosen from $\B$ arbitrarily subject to our specified conditions, and then any remaining edges are coloured black.
\begin{itemize}
    \item
If $k\equiv 0 \mod{3}$ then, for each $c \in C$, assign colour $c$ to two edges in $\mathcal{A}_i$ for each $i \in \{1,2,3\}$;
    \item
If $k\equiv 1 \mod{6}$ then, for each $j \in \{1,2,3\}$ and $c \in C_j$, assign colour $c$ to four edges in $\mathcal{A}_j$ and one edge in $\mathcal{A}_i$ for each $i \in \{1,2,3\} \setminus \{j\}$;
    \item
If $k\equiv 2 \mod{3}$ then, for each $j \in \{1,2,3\}$ and $c \in C_j$, assign colour $c$ to three edges in $\mathcal{A}_i$ for each $i \in \{1,2,3\} \setminus \{j\}$.
\end{itemize}
It only remains to check that there are sufficiently many edges in $\A_i$ and $\B_i$ for each $i \in \{1,2,3\}$ that we can choose an edge colouring in this manner. Using the fact that $|\mathcal{A}_i|=(k-2)^2$ and $|\mathcal{B}_i|=\binom{k-2}{3}$ for each $i \in \{1,2,3\}$, it is routine to check this by considering cases according to the congruence class of $k$ modulo 6.
\end{proof}

Again, the Sperner partition systems constructed to prove Lemma~\ref{l:family} are almost uniform.

\section{Bounds for small \texorpdfstring{$\bm{n}$}{n} and \texorpdfstring{$\bm{k}$}{k}}\label{s:smallParams}

We conclude this paper by displaying the values of the upper and lower bounds we have obtained for some small parameters $(n,k)$.

In Table~\ref{tab:smallBounds} we list, for $4 \leq k \leq 7$ and $2k+2 \leq n \leq 33$ a lower bound and an upper bound on $\SP(n,k)$ in the top and bottom rows respectively of the appropriate cell. The upper bound is the bound implied by Theorem~\ref{t:upperBound} and is followed by the improvement over $\MMS(n,k)$ in brackets. The lower bound is the best one attainable via our results and those of \cite{LiMea,MeaMouSte} and is followed by the source of the bound according to the following key. ``M'' refers to a bound obtained through the monotonicity of $\SP(n,k)$ in $n$; ``\cite{LiMea}'' refers to one of the bounds given in \cite{LiMea} (and stated in our introduction); ``L\ref{l:productConstruction}'' refers to Lemma~\ref{l:productConstruction} and is followed by the values of $m$ and $n$ used; and finally ``L\ref{l:mainConstruction}'' and ``L\ref{l:altConstruction}'' refer to Lemmas~\ref{l:mainConstruction} and \ref{l:altConstruction} and are followed by the value of $u$ used. The exception to the above is when $k$ divides $n$, in which case the known exact value of $\SP(n,k)$ is placed by itself in the cell.

Figures~\ref{fig:k5} and \ref{fig:k10} visualise bounds on $\SP(n,k)$ for the example values $k=5$ and $k=10$ respectively. Values of $n$ between $2k+2$ and $100$ appear on the horizontal axis, and above each are a grey and a black line segment. The grey segment gives the interval between $\NLB(n,k)$ and $\MMS(n,k)$, whereas the black segment gives the interval between the best known lower and upper bounds on $\SP(n,k)$ according to the results in this paper and in \cite{LiMea,MeaMouSte}. Note that the vertical axis is log scaled.

\begin{table}[H]
\centering
\small
\setstretch{1.0}
\begin{tabular}{|r|rr|rr|rr|rr|}
\hline
\multicolumn{1}{|c|}{$n$} & \multicolumn{2}{c|}{$k=4$}          & \multicolumn{2}{c|}{$k=5$}           & \multicolumn{2}{c|}{$k=6$}                             & \multicolumn{2}{c|}{$k=7$}                            \\ \hline
\multirow{2}{*}{10}       & 10                       & L10 (1)  &                         &            &                        &                               &                       &                               \\
                          & 11                       & (5)      &                         &            &                        &                               &                       &                               \\ \hline
\multirow{2}{*}{11}       & 11                       & \cite{LiMea}        &                         &            &                        &                               &                       &                               \\
                          & 19                       & (8)      &                         &            &                        &                               &                       &                               \\ \hline
\multirow{2}{*}{12}       & \multirow{2}{*}{55}      &          & 12                      & L10 (1)    &                        &                               &                       &                               \\
                          &                          &          & 13                      & (5)        &                        &                               &                       &                               \\ \hline
\multirow{2}{*}{13}       & 55                       & M        & 12                      & M          &                        &                               &                       &                               \\
                          & 72                       & (12)     & 19                      & (8)        &                        &                               &                       &                               \\ \hline
\multirow{2}{*}{14}       & 55                       & M        & 17                      & L10 (1)    & 13                     & \cite{LiMea} &                       &                               \\
                          & 110                      & (23)     & 33                      & (12)       & 15                     & (5)                           &                       &                               \\ \hline
\multirow{2}{*}{15}       & 55                       & M        & \multirow{2}{*}{91}     &            & 13                     & M                             &                       &                               \\
                          & 190                      & (37)     &                         &            & 20                     & (8)                           &                       &                               \\ \hline
\multirow{2}{*}{16}       & \multirow{2}{*}{455}     &          & 91                      & M          & 28                     & L10 (1)                       & 15                    & \cite{LiMea} \\
                          &                          &          & 114                     & (16)       & 29                     & (13)                          & 17                    & (5)                           \\ \hline
\multirow{2}{*}{17}       & 455                      & M        & 91                      & M          & 28                     & M                             & 15                    & M                             \\
                          & 636                      & (67)     & 162                     & (28)       & 51                     & (17)                          & 21                    & (8)                          \\ \hline
\multirow{2}{*}{18}       & 648                      & L10 (2)  & 91                      & M          & \multirow{2}{*}{136}   &                               & 27                    & L10 (1)                       \\
                          & 994                      & (133)    & 243                     & (48)       &                        &                               & 30                    & (10)                          \\ \hline
\multirow{2}{*}{19}       & 648                      & M        & 91                      & M          & 136                    & M                             & 27                    & M                             \\
                          & 1\,719                     & (219)    & 410                     & (74)       & 167                    & (17)                          & 42                    & (17)                          \\ \hline
\multirow{2}{*}{20}       & \multirow{2}{*}{3\,876}    &          & \multirow{2}{*}{969}    &            & 210                    & L10 (1)                       & 40                    & L10 (1)                       \\
                          &                          &          &                         &            & 221                    & (34)                          & 70                    & (25)                          \\ \hline
\multirow{2}{*}{21}       & 3\,876                     & M        & 969                     & M          & 210                    & M                             & \multirow{2}{*}{190}  &                               \\
                          & 5\,601                     & (428)    & 1\,290                    & (103)      & 308                    & (54)                          &                       &                               \\ \hline
\multirow{2}{*}{22}       & 5\,544                     & L10 (2)  & 1\,008                    & L10 (2)    & 210                    & M                             & 190                   & M                             \\
                          & 8\,844                     & (888)    & 1\,849                    & (208)      & 454                    & (87)                          & 227                   & (20)                          \\ \hline
\multirow{2}{*}{23}       & 5\,544                     & M        & 1\,008                    & M          & 210                    & M                             & 190                   & M                             \\
                          & 15\,355                    & (1\,469)   & 2\,808                    & (366)      & 751                    & (134)                         & 291                   & (36)                          \\ \hline
\multirow{2}{*}{24}       & \multirow{2}{*}{33\,649}   &          & 3\,366                    & L10 (2)    & \multirow{2}{*}{1\,771}  &                               & 190                   & M                             \\
                          &                          &          & 4\,734                    & (579)      &                        &                               & 384                   & (58)                          \\ \hline
\multirow{2}{*}{25}       & 33\,649                    & M        & \multirow{2}{*}{10\,626}  &            & 1\,771                   & M                             & 190                   & M                             \\
                          & 49\,605                    & (2\,971)   &                         &            & 2\,271                   & (144)                         & 525                   & (92)                          \\ \hline
\multirow{2}{*}{26}       & 40\,898                    & L10 (3)  & 10\,626                   & M          & 1\,771                   & M                             & 286                   & L11 (2)                       \\
                          & 78\,927                    & (6\,343)   & 14\,514                   & (834)      & 3\,071                   & (285)                         & 762                   & (144)                         \\ \hline
\multirow{2}{*}{27}       & 40\,898                    & M        & 10\,626                   & M          & 1\,771                   & M                             & 286                   & M                             \\
                          & 137\,410                   & (10\,595)  & 21\,020                   & (1\,750)     & 4\,311                   & (494)                         & 1\,242                  & (220)                         \\ \hline
\multirow{2}{*}{28}       & \multirow{2}{*}{296\,010}  &          & 16\,016                   & L11 (3)    & 4\,140                   & L10 (2)                       & \multirow{2}{*}{2\,925} &                               \\
                          &                          &          & 32\,169                   & (3\,150)     & 6\,408                   & (818)                         &                       &                               \\ \hline
\multirow{2}{*}{29}       & 296\,010                   & M        & 16\,830                   & L7 (5, 24) & 4\,140                   & M                             & 2\,925                  & M                             \\
                          & 442\,270                   & (21\,745)  & 54\,342                   & (5\,035)     & 10\,606                  & (1\,269)                        & 3\,643                  & (187)                         \\ \hline
\multirow{2}{*}{30}       & 621\,075                   & L10 (3)  & \multirow{2}{*}{118\,755} &            & \multirow{2}{*}{23\,751} &                               & 3\,003                  & L10 (1)                       \\
                          & 707\,796                   & (47\,420)  &                         &            &                        &                               & 4\,723                  & (366)                         \\ \hline
\multirow{2}{*}{31}       & 621\,075                   & M        & 118\,755                  & M          & 23\,751                  & M                             & 3\,003                  & M                             \\
                          & 1\,234\,969                  & (79\,818)  & 164\,701                  & (7\,327)     & 31\,093                  & (1\,389)                        & 6\,291                  & (615)                         \\ \hline
\multirow{2}{*}{32}       & \multirow{2}{*}{2\,629\,575} &          & 139\,568                  & L10 (2)    & 33\,600                  & L10 (2)                       & 4\,800                  & L10 (2)                       \\
                          &                          &          & 240\,248                  & (15\,849)    & 42\,433                  & (2\,876)                        & 8\,682                  & (999)                         \\ \hline
\multirow{2}{*}{33}       & 2\,629\,575                  & M        & 139\,568                  & M          & 33\,600                  & M                             & 4\,800                  & M                             \\
                          & 3\,966\,925                  & (165\,264) & 369\,680                  & (29\,044)    & 60\,038                  & (5\,113)                        & 12\,696                 & (1\,601)                        \\ \hline
\end{tabular}
\caption{Lower and upper bounds on $\SP(n,k)$}
\label{tab:smallBounds}
\end{table}

\setlength{\abovecaptionskip}{0pt plus 1pt minus 1pt}

\begin{figure}[H]
\centering
 \includegraphics[width = \textwidth]{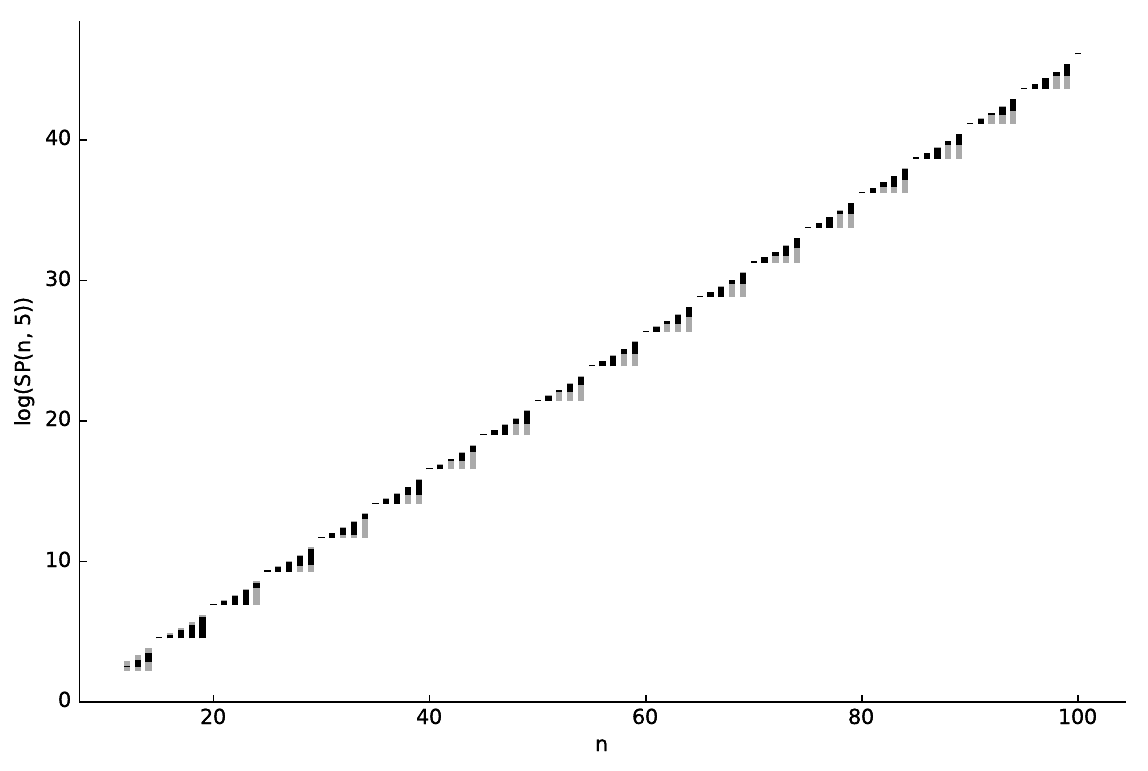}
\caption{Best known bounds on $\SP(n,5)$ compared to $\NLB(n,5)$ and $\MMS(n,5)$}
\label{fig:k5}
\end{figure}\bigskip

\begin{figure}[H]
\centering
 \includegraphics[width = \textwidth]{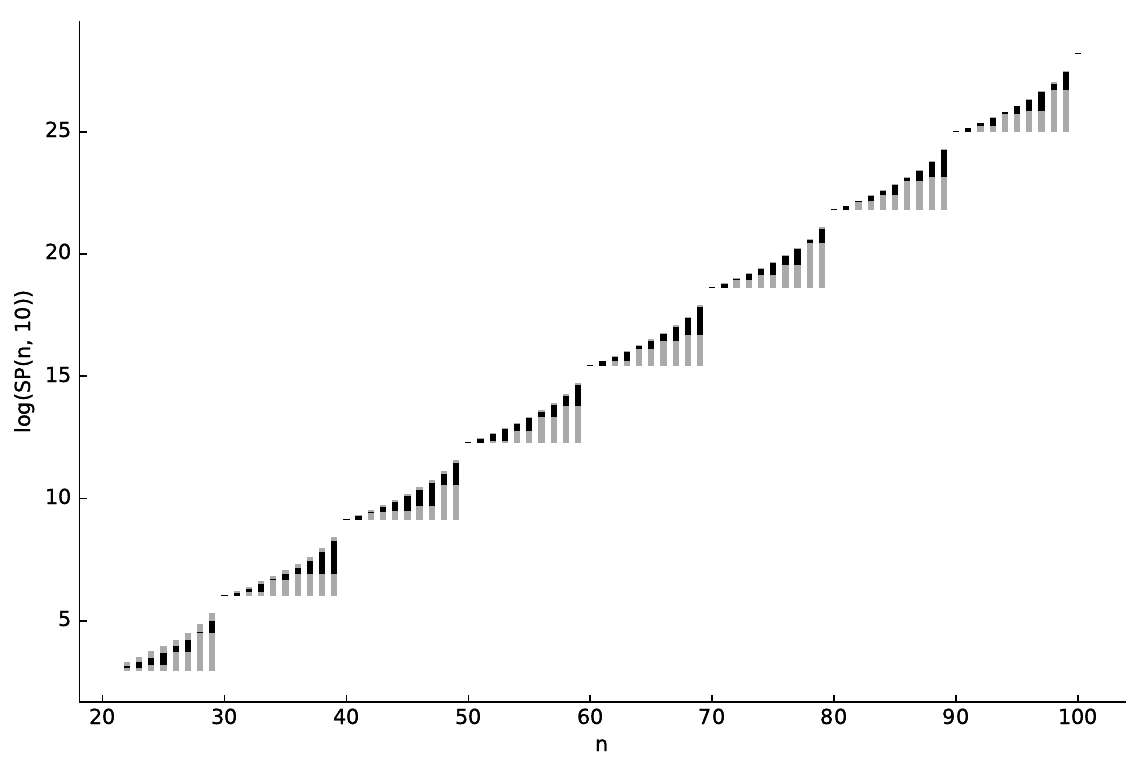}
\caption{Best known bounds on $\SP(n,10)$ compared to $\NLB(n,10)$ and $\MMS(n,10)$}
\label{fig:k10}
\end{figure}

\section*{Acknowledgments}\label{ackref}
Much of this research was undertaken during visits by C.J.~Colbourn and D.~Horsley to Beijing Jiaotong University.
They express their sincere thanks to the 111 Project of China (B16002) for financial support and to the Department of Mathematics at Beijing Jiaotong University for their kind hospitality. The authors' research received support from the following sources. Y.~ Chang: NSFC grant 11971053; C.J.~Colbourn: NSF grants 1421058 and 1813729; A.~Gowty: Australian Government Research Training Program Scholarship; D.~Horsley: ARC grants DP150100506 and FT160100048; J.~Zhou: NSFC grant 11571034.

\end{document}